\tikzstyle{edge} = [fill,opacity=.5,fill opacity=.5,line cap=round, line join=round, line width=50pt]
\theoremstyle{plain}
\theoremstyle{definition}
\newtheorem{theorem}{Theorem}[section]
\newtheorem{lemma}[theorem]{Lemma}
\newtheorem{definition}[theorem]{Definition}
\newtheorem{example}[theorem]{Example}
\newtheorem{proposition}[theorem]{Proposition}
\newtheorem{corollary}[theorem]{Corollary}
\DeclareMathAlphabet{\mathpzc}{OT1}{pzc}{m}{it}
\newcommand{\mf}[1]{\mbox{$\mathfrak #1$}}
\begin{document}

\title{Forced perimeter in Elnitksy polygons}

\author{Bridget Eileen Tenner}
\address{Department of Mathematical Sciences, DePaul University, Chicago, IL, USA}
\email{bridget@math.depaul.edu}
\thanks{Research partially supported by a DePaul University Faculty Summer Research Grant and by Simons Foundation Collaboration Grant for Mathematicians 277603.}

\keywords{}%%

\subjclass[2010]{Primary: 05B45; %Tessellation and tiling problems
Secondary: 52B60, %Isoperimetric problems for polytopes
20F55%Reflection and Coxeter groups
}

\begin{abstract}%%
We study tiling-based perimeter and characterize when a given perimeter tile appears in all rhombic tilings of an Elnitsky polygon. Regardless of where on the perimeter this tile appears, its forcing can be described in terms of $321$-patterns. We characterize the permutations with maximally many forced right-perimeter tiles, and show that they are enumerated by the Catalan numbers.
\end{abstract}

\maketitle

\section{Introduction}\label{sec:intro}

In recent work, we developed tiling-based notions of perimeter and area \cite{tenner iso-tiling}. One focus of that work was on rhombic tilings of Elnitsky polygons, due to their significance for the combinatorics of reduced decompositions of permutations. This paper continues that work through the notion of a ``forced'' perimeter tile. In addition to the combinatorial significance of such an object, this has an important analogue in the traditional use of isoperimetric quotients to assess geographic compactness for legal and electoral purposes. In those settings, excessive perimeter can be penalizing, despite the fact that some regions---such as those on a coast---necessarily have extensive perimeter, through no manipulative endeavors. (See \cite{duchin tenner} for more background on the subject.)

The combinatorial significance of perimeter tiles was discussed in \cite[Corollary 2.17]{tenner iso-tiling}. Here we take that relevance as a given, providing motivation for studying these objects, and we devote the rest of this work to an analysis of when perimeter tiles are ``forced.''

The results that we present here tie together reduced decompositions, pattern avoidance, and Catalan numbers. We begin with a brief overview of Elnitsky polygons in Section~\ref{sec:elnitsky} and perimeter tiles in Section~\ref{sec:perimeter}. In Section~\ref{sec:forced}, we introduce the notion of a ``forced'' perimeter tile and characterize the conditions under which a given right-, left-, top-, or bottom-perimeter tile is forced (Theorem~\ref{thm:force right}, Corollary~\ref{cor:force left}, Theorem~\ref{thm:force top}, and Corollary~\ref{cor:force bottom}, respectively). Each of these results can be phrased in terms of $321$-patterns. Having established those conditions, we use Section~\ref{sec:optimizing} to look at the special case of permutations with maximally forced right-perimeter tiles. These are characterized in Theorem~\ref{thm:descents in optimal}, and Theorem~\ref{thm:optimal right enumeration} shows that they are enumerated by Catalan numbers. We conclude the paper with suggestions for some of the many directions in which to extend this research.

\section{Elnitsky polygons}\label{sec:elnitsky}

We briefly introduce the primary object of this work, assuming a familiarity with permutations, simple reflections, reduced decompositions, commutation and braid relations, inversions, and so on. For further background and a hint at the deep and varied mathematical interest in these topics, the reader is referred to \cite{bedard, bjs, bjorner brenti, elnitsky, fishel milicevic patrias tenner, jonsson welker, stanley, tenner rdpp, tenner rwm, zollinger}.

In \cite{elnitsky}, Elnitsky related the commutation classes of reduced decompositions of a permutation $w$ to rhombic tilings of a particular polygon $X(w)$, and the bijection that he developed can be exploited to elucidate properties of permutations. It also gives combinatorial meaning to the tiles appearing in such a tiling.

\begin{definition}
Fix $w \in \mf{S}_n$. \emph{Elnitsky's polygon} for $w$ is the equilateral $2n$-gon defined so that:
\begin{itemize}
\item sides are labeled $1, \ldots, n, w(n), \ldots, w(1)$ in counterclockwise order,
\item the first $n$ of those sides form half of a convex $2n$-gon, and
\item sides with the same label are parallel.
\end{itemize}
We refer to this $2n$-gon as $X(w)$.
\end{definition}

The tilings that we consider---and the tilings that Elnitsky permits---follow certain rules.

\begin{definition}
A \emph{rhombic tiling} of $X(w)$ consists of tiles whose edges are all congruent and parallel to the edges of $X(w)$. The set of all rhombic tilings of $X(w)$ is $T(w)$. The \emph{labels} of a tile $t$ appearing in some $T \in T(w)$ are the labels of the sides of $X(w)$ to which the edges of $t$ are parallel.
\end{definition}

\begin{corollary}[see {\cite[Corollary 2.12]{tenner iso-tiling}}]\label{cor:labels}
In any rhombic tiling of an Elnitsky polygon, no two tiles have the same labels. Moreover, there exists a tile labeled $\{x<y\}$ if and only if $w^{-1}(x) > w^{-1}(y)$.
\end{corollary}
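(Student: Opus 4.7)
My plan is to prove both assertions simultaneously by establishing a bijection between the tiles of any $T\in T(w)$ and the inversions $\{(x,y):x<y,\ w^{-1}(x)>w^{-1}(y)\}$ of $w$. The approach uses the zonotopal \emph{strip} structure of rhombic tilings of a convex $2n$-gon: for each label $k\in\{1,\ldots,n\}$, the tiles of $T$ whose edge directions include $k$ organize into a connected ``de Bruijn strip'' entering $X(w)$ through one boundary side labeled $k$ and exiting through the other. Each tile lies in exactly two such strips (one for each of its labels) and is recovered as the unique intersection of those two strips.

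The technical heart of the proof is that any two strips cross at most once. Because $X(w)$ is convex and each strip proceeds monotonically transverse to its label direction, a second crossing of the strips for labels $a$ and $b$ would require one strip to reverse direction within the polygon, contradicting convexity (alternatively: the two crossings would bound a sub-region whose boundary cannot be filled by rhombi with the allowed edge directions). This single-crossing bound immediately yields the first claim, since any two tiles sharing a label pair would force their common pair of strips to cross twice.

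Next I would determine when the strips for two labels $x<y$ \emph{can} cross, by locating their boundary endpoints. Reading the sides of $X(w)$ counterclockwise yields $1,2,\ldots,n,w(n),w(n-1),\ldots,w(1)$; on the first half $x$ precedes $y$, while on the second half $x$ sits at position $n-w^{-1}(x)+1$ and $y$ at position $n-w^{-1}(y)+1$, so $x$ precedes $y$ there exactly when $w^{-1}(x)>w^{-1}(y)$. Thus the four boundary occurrences of $x$ and $y$ interleave (as $xyxy$) precisely when $(x,y)$ is an inversion of $w$, and otherwise nest (as $xyyx$). Since a pair of strips with nested endpoints in a convex polygon cannot cross, a tile labeled $\{x<y\}$ can appear only when $(x,y)$ is an inversion.

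To finish, I would invoke a counting argument: the total number of tiles in any rhombic tiling of $X(w)$ equals $\ell(w)$, which is also the number of inversions of $w$. Combined with the single-crossing bound and the restriction that only interleaved pairs can cross, this forces every inversion to correspond to exactly one tile, yielding the ``if and only if'' statement. The main obstacle is justifying the single-crossing lemma in a self-contained way; after that, the remaining work is essentially bookkeeping on the boundary labels.
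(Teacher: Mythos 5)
The paper itself does not prove this statement: it is quoted from \cite[Corollary 2.12]{tenner iso-tiling}, where it is deduced from Elnitsky's bijection between rhombic tilings of $X(w)$ and commutation classes of reduced words, under which tiles correspond to letters of a reduced word and a tile's labels are the two values transposed by that letter; distinctness of labels and the inversion criterion are then inherited from the fact that a reduced word transposes each pair of values at most once, and exactly the inverted pairs once. Your de Bruijn--strip argument is therefore a genuinely different, more self-contained route, and its skeleton (strips, a single-crossing bound, and the boundary interleaving analysis, which is carried out correctly) can be made to work.

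As written, though, there are two genuine gaps. First, you repeatedly lean on convexity of $X(w)$, but $X(w)$ is not convex in general: only the leftside boundary is half of a convex $2n$-gon, while the rightside boundary may zigzag (see $X(34251)$ in Figure~\ref{fig:X(34251)}). So ``a second crossing would force a strip to reverse direction, contradicting convexity'' and ``nested strips in a convex polygon cannot cross'' are not yet proofs. The monotonicity you need is true but should be argued locally: two consecutive tiles of the strip for label $k$ lie on opposite sides of the line through their shared $e_k$-parallel edge, so the sequence of parallel lines met by the strip is strictly monotone transverse to $e_k$; consequently every crossing of the $a$-strip with the $b$-strip occurs with the same sign, and for two properly embedded arcs in a disk this bounds the number of crossings by one, with the parity of crossings governed by whether their four boundary endpoints interleave or nest. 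Second, your closing step assumes that every rhombic tiling of $X(w)$ has exactly $\ell(w)$ tiles. That fact is not freely available here---it is usually obtained from Elnitsky's correspondence, i.e., from essentially the statement being proved---so invoking it risks circularity unless you prove it independently (for instance by an area computation). It is also unnecessary: your own interleaving analysis shows the $x$- and $y$-strip endpoints alternate on the boundary exactly when $w^{-1}(x)>w^{-1}(y)$, and alternation forces an odd (hence, by the single-crossing bound, exactly one) number of crossings while nesting forces an even (hence zero) number; this yields both directions of the ``if and only if,'' and the first assertion, with no counting at all.
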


Due to this result, it will cause no confusion if we refer to a tile by its labels, writing ``the tile $\{x,y\}$'' instead of ``the tile labeled $\{x,y\}$.''

Because we are interested in tilings, it suffices to consider tiling regions that are contiguous. Thus, throughout this paper, each permutation $w \in \mf{S}_n$ will be assumed to satisfy
$$\{w(1),\ldots,w(r)\} \neq \{1,\ldots,r\}$$
for all $r < n$. A permutation $w \in \mf{S}_n$ is \emph{fully supported} if all simple reflections appear in the reduced decompositions for $w$. Therefore, as discussed in \cite{tenner repetition},  another way of phrasing this assumption is to say that we assume all permutations are fully supported.

For the sake of consistency, we orient Elnitsky polygons---in figures and for the sake of discussion---so that the top vertex is the intersection of the sides labeled $1$ and $w(1)$, and the counterclockwise path from the top vertex to the bottom vertex is the \emph{leftside boundary}. We mark the top and bottom vertices with dots.

\begin{example}
The Elnitsky polygon $X(34251)$ is illustrated in Figure~\ref{fig:X(34251)}.
\end{example}

\begin{figure}[htbp]
\begin{tikzpicture}
\begin{scope}%%
\clip (.2,2) rectangle (-2,-2);
\node[draw=none,minimum size=4cm,regular polygon,regular polygon sides=10] (a) {};
\end{scope}%%
\foreach \x in {1,2,3,4,5,6} {\coordinate (corner \x) at (a.side \x);}
\foreach \y [evaluate=\y as \x using \y+1] in {1,2,3,4,5} {\coordinate (side \y) at ($(corner \x)-(corner \y)$);}
\foreach \y [evaluate=\y as \x using \y+1] in {1,2,3,4,5} {\coordinate (side -\y) at ($(corner \y)-(corner \x)$);}
\foreach \x in {(a.side 1),(a.side 6)} {\fill \x circle (2pt);}
\draw (corner 1) -- (corner 2) -- (corner 3) -- (corner 4) -- (corner 5) -- (corner 6);
\draw (corner 1) --++(side 3) --++(side 4) --++(side 2) --++(side 5) --++(side 1);
\end{tikzpicture}
\caption{The Elnitsky polygon for $34251$.}
\label{fig:X(34251)}
\end{figure}
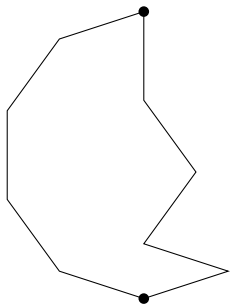

\section{Perimeter tiles}\label{sec:perimeter}

In \cite{tenner iso-tiling}, we introduced the idea of a ``perimeter'' tile. We reiterate that here, in the context of rhombic tilings of Elnitsky polygons.

\begin{definition}
Fix a permutation $w$ and a rhombic tiling $T \in T(w)$. A tile $t$ that shares at least two consecutive edges with the boundary of $X(w)$ is a \emph{perimeter tile}. We can further specify whether $t$ is a \emph{left-}, \emph{right-}, \emph{top-} or \emph{bottom-perimeter} tile based on whether it shares two consecutive edges with the leftside boundary of $X(w)$, the rightside boundary of $X(w)$, the edges on either side of the top vertex of $X(w)$, or the edges of either side of the bottom vertex of $X(w)$, respectively. This specification is the \emph{type} of the tile.
\end{definition}

Note that a perimeter tile may have more than one type. As a trivial example, the sole tile in $T(21)$ is a left-, right-, top-, and bottom-perimeter tile.

The perimeter tiles that appear among all elements of $T(w)$ can vary. Perimeter properties for elements of $T(n\cdots321)$ were studied in \cite{tenner iso-tiling}, as were the permutations $w$ for which elements of $T(w)$ have minimally many perimeter tiles. In the present work, we consider perimeter tiles from a different perspective: namely, when a given perimeter tile appears among all elements of $T(w)$.

\section{Forced tiles}\label{sec:forced}

Fix a permutation $w$ and consider the rhombic tilings $T(w)$. The perimeter tiles that appear may vary between tilings, as they do in the two rhombic tilings of $X(321)$ depicted in Figure~\ref{fig:321}.

\begin{figure}[htbp]
\begin{tikzpicture}
\begin{scope}%%
\clip (.2,1.2) rectangle (-1.2,-1.2);
\node[draw=none,minimum size=2.4cm,regular polygon,regular polygon sides=6] (a) {};
\foreach \x in {(a.side 1),(a.side 4)} {\fill \x circle (2pt);}
\end{scope}%%
\foreach \x in {1,2,3,4} {\coordinate (corner \x) at (a.side \x);}
\foreach \y [evaluate=\y as \x using \y+1] in {1,2,3} {\coordinate (side \y) at ($(corner \x)-(corner \y)$);}
\foreach \y [evaluate=\y as \x using \y+1] in {1,2,3} {\coordinate (side -\y) at ($(corner \y)-(corner \x)$);}
\draw (corner 1) -- (corner 2) -- (corner 3) -- (corner 4);
\draw (corner 1) --++(side 3) coordinate (corner 6) --++(side 2) coordinate (corner 5) --++(side 1);
\draw (corner 6) --++(side 1) --++(side -3);
\draw (corner 4) --++(side -2);
\end{tikzpicture}
\hspace{.5in}
\begin{tikzpicture}
\begin{scope}%%
\clip (.2,1.2) rectangle (-1.2,-1.2);
\node[draw=none,minimum size=2.4cm,regular polygon,regular polygon sides=6] (a) {};
\foreach \x in {(a.side 1),(a.side 4)} {\fill \x circle (2pt);}
\end{scope}%%
\foreach \x in {1,2,3,4} {\coordinate (corner \x) at (a.side \x);}
\foreach \y [evaluate=\y as \x using \y+1] in {1,2,3} {\coordinate (side \y) at ($(corner \x)-(corner \y)$);}
\foreach \y [evaluate=\y as \x using \y+1] in {1,2,3} {\coordinate (side -\y) at ($(corner \y)-(corner \x)$);}
\draw (corner 1) -- (corner 2) -- (corner 3) -- (corner 4);
\draw (corner 1) --++(side 3) coordinate (corner 6) --++(side 2) coordinate (corner 5) --++(side 1);
\draw (corner 3) --++(side -1) --++(side 3);
\draw (corner 1) --++(side 2);
\end{tikzpicture}
\caption{The two rhombic tilings of $X(321)$. Each has three perimeter tiles, and none of the perimeter tiles of a given type are the same in the two tilings.}
\label{fig:321}
\end{figure}
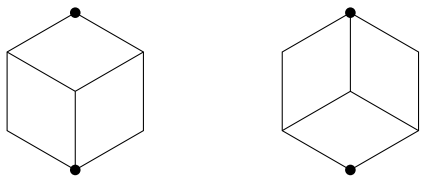

\begin{definition}
Fix a permutation $w$. If a specific tile $t$ appears as a right-perimeter tile among all tilings $T \in T(w)$, then $t$ is a \emph{forced right-perimeter} tile for $w$. Forced left-, top-, and bottom-perimeter tiles are defined analogously.
\end{definition}

As we saw in Figure~\ref{fig:321}, there are some $w$ for which no perimeter tiles are forced, but this is not always the case.

\begin{example}\label{ex:34251}
There are three rhombic tilings of $X(34251)$. As depicted in Figure~\ref{fig:34251}, each of the three has a right- and bottom-perimeter tile $\{1,5\}$. Therefore $\{1,5\}$ is a forced right- and bottom-perimeter tile for $34251$. There are no other forced perimeter tiles for this permutation.
\end{example}

\begin{figure}[htbp]
\begin{tikzpicture}
\begin{scope}%%
\clip (.2,2) rectangle (-2,-2);
\node[draw=none,minimum size=4cm,regular polygon,regular polygon sides=10] (a) {};
\end{scope}%%
\foreach \x in {1,2,3,4,5,6} {\coordinate (corner \x) at (a.side \x);}
\foreach \y [evaluate=\y as \x using \y+1] in {1,2,3,4,5} {\coordinate (side \y) at ($(corner \x)-(corner \y)$);}
\foreach \y [evaluate=\y as \x using \y+1] in {1,2,3,4,5} {\coordinate (side -\y) at ($(corner \y)-(corner \x)$);}
\fill[black!20] (corner 6) --++(side -1) --++(side -5) --++(side 1) --++(side 5);
\foreach \x in {(a.side 1),(a.side 6)} {\fill \x circle (2pt);}
\draw (corner 1) -- (corner 2) -- (corner 3) -- (corner 4) -- (corner 5) -- (corner 6);
\draw (corner 1) --++(side 3) coordinate (corner 10) --++(side 4) coordinate (corner 9) --++(side 2) coordinate (corner 8) --++(side 5) coordinate (corner 7) --++(side 1);
\draw (corner 5) -- (corner 8);
\draw (corner 1) --++(side 2) --++(side 3) --++(side -2);
\draw (corner 3) --++(side -1);
\draw (corner 4) --++(side -1) --++(side 4);
\end{tikzpicture}
\hspace{.5in}
\begin{tikzpicture}
\begin{scope}%%
\clip (.2,2) rectangle (-2,-2);
\node[draw=none,minimum size=4cm,regular polygon,regular polygon sides=10] (a) {};
\end{scope}%%
\foreach \x in {1,2,3,4,5,6} {\coordinate (corner \x) at (a.side \x);}
\foreach \y [evaluate=\y as \x using \y+1] in {1,2,3,4,5} {\coordinate (side \y) at ($(corner \x)-(corner \y)$);}
\foreach \y [evaluate=\y as \x using \y+1] in {1,2,3,4,5} {\coordinate (side -\y) at ($(corner \y)-(corner \x)$);}
\fill[black!20] (corner 6) --++(side -1) --++(side -5) --++(side 1) --++(side 5);
\foreach \x in {(a.side 1),(a.side 6)} {\fill \x circle (2pt);}
\draw (corner 1) -- (corner 2) -- (corner 3) -- (corner 4) -- (corner 5) -- (corner 6);
\draw (corner 1) --++(side 3) coordinate (corner 10) --++(side 4) coordinate (corner 9) --++(side 2) coordinate (corner 8) --++(side 5) coordinate (corner 7) --++(side 1);
\draw (corner 5) -- (corner 8);
\draw (corner 2) --++(side 3) --++(side -1);
\draw (corner 4) --++(side -2);
\draw (corner 4) --++(side -1) --++(side 4);
\draw (corner 10) --++(side 2);
\end{tikzpicture}
\hspace{.5in}
\begin{tikzpicture}
\begin{scope}%%
\clip (.2,2) rectangle (-2,-2);
\node[draw=none,minimum size=4cm,regular polygon,regular polygon sides=10] (a) {};
\end{scope}%%
\foreach \x in {1,2,3,4,5,6} {\coordinate (corner \x) at (a.side \x);}
\foreach \y [evaluate=\y as \x using \y+1] in {1,2,3,4,5} {\coordinate (side \y) at ($(corner \x)-(corner \y)$);}
\foreach \y [evaluate=\y as \x using \y+1] in {1,2,3,4,5} {\coordinate (side -\y) at ($(corner \y)-(corner \x)$);}
\fill[black!20] (corner 6) --++(side -1) --++(side -5) --++(side 1) --++(side 5);
\foreach \x in {(a.side 1),(a.side 6)} {\fill \x circle (2pt);}
\draw (corner 1) -- (corner 2) -- (corner 3) -- (corner 4) -- (corner 5) -- (corner 6);
\draw (corner 1) --++(side 3) coordinate (corner 10) --++(side 4) coordinate (corner 9) --++(side 2) coordinate (corner 8) --++(side 5) coordinate (corner 7) --++(side 1);
\draw (corner 5) -- (corner 8);
\draw (corner 2) --++(side 3) --++(side -1);
\draw (corner 4) --++(side -2) --++(side 4) --++(side -1);
\draw (corner 5) --++(side -2);
\end{tikzpicture}
\caption{The three rhombic tilings of $X(34251)$. The shaded right- and bottom-perimeter tile appears in all of them.}
\label{fig:34251}
\end{figure}
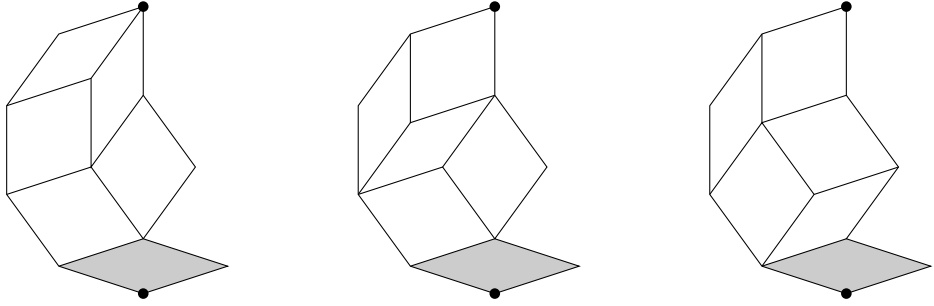

As we will see, $321$-patterns are critical to the determination of forced perimeter tiles, stemming from a previously obtained result.

\begin{definition}
Fix a permutation $w$ and a tiling $T \in T(w)$. This $T$ contains a \emph{subhexagon} if it has a configuration matching either tiling in Figure~\ref{fig:321}.
\end{definition}

\begin{proposition}[see {\cite[Theorem 6.4]{tenner rdpp}}]\label{prop:hexagon}
There is a tiling in $T(w)$ with a subhexagon having sides labeled $x < y < z$ if and only if $zyx$ is appears as a $321$-pattern in $w$.
\end{proposition}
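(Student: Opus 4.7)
The statement splits into a forward direction and a converse, with most of the work in the latter. For the forward direction, a subhexagon with sides labeled $x<y<z$ in some $T \in T(w)$ contains the three rhombi $\{x,y\}$, $\{x,z\}$, and $\{y,z\}$. Corollary~\ref{cor:labels} then yields $w^{-1}(x) > w^{-1}(y) > w^{-1}(z)$, which is exactly the statement that the values $z, y, x$ occur in that left-to-right order in the one-line notation of $w$, exhibiting a $321$-pattern.

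For the converse, my plan is to pass through Elnitsky's bijection between rhombic tilings of $X(w)$ and commutation classes of reduced decompositions of $w$, under which hexagon-flips correspond to braid relations. Under this translation, a subhexagon with sides $x<y<z$ in some tiling corresponds to a reduced decomposition of $w$ containing, at three consecutive positions, a braid triple $s_a s_{a+1} s_a$ (or $s_{a+1} s_a s_{a+1}$) whose application permutes the values $x, y, z$ situated in positions $a, a+1, a+2$. So it suffices to exhibit such a reduced decomposition.

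Given the $321$-pattern $zyx$ in $w$, I would build a reduced factorization $w = u \cdot (s_a s_{a+1} s_a) \cdot v$ in which $u$ sorts all letters other than $x,y,z$ into their ultimate positions relative to the triple $\{x,y,z\}$, using only transpositions that never swap two of $\{x,y,z\}$ past each other, so that $u$ places $(x,y,z)$ in positions $(a,a+1,a+2)$ in that increasing order. The braid triple then reverses this block to $(z,y,x)$, realizing precisely the three inversions $(x,y)$, $(x,z)$, $(y,z)$, and $v$ completes any remaining sorting to reach $w$. The main obstacle will be verifying that this factorization is reduced, i.e.\ that $\ell(w) = \ell(u) + 3 + \ell(v)$: one must show that the three inversions among $\{x,y,z\}$ arise only inside the braid block, while every other inversion of $w$ is realized exactly once across $u$ and $v$. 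A careful choice of the position $a$, combined with a standard sorting argument on the non-pattern letters and the unique-tile correspondence of Corollary~\ref{cor:labels}, should settle this bookkeeping. Once the reduced decomposition is in hand, Elnitsky's bijection returns the tiling with the promised subhexagon.
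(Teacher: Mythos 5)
The paper itself gives no proof of Proposition~\ref{prop:hexagon}: it is imported wholesale from \cite[Theorem~6.4]{tenner rdpp}. So if you want a self-contained argument, it has to stand on its own. Your forward direction does: the subhexagon contains the three rhombi labeled $\{x,y\}$, $\{x,z\}$, $\{y,z\}$, and Corollary~\ref{cor:labels} converts these into $w^{-1}(x)>w^{-1}(y)>w^{-1}(z)$, which is exactly an occurrence of $zyx$. That half is complete.

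The converse, however, is only a plan, and it halts at precisely the step that \emph{is} the theorem. Passing through Elnitsky's bijection is a reasonable route: a tiling with a subhexagon labeled $\{x,y,z\}$ corresponds to a commutation class containing a reduced word in which the three crossings of the strands $x,y,z$ appear as a consecutive braid factor. But to exhibit such a word you must produce an intermediate permutation $u$ with $\mathrm{Inv}(u)\subseteq\mathrm{Inv}(w)$ in which $x,y,z$ occupy three consecutive positions in increasing order; only then is $\ell(w)=\ell(u)+3+\ell(v)$ and the concatenated word reduced. Your proposal asserts that ``a careful choice of the position $a$'' and ``a standard sorting argument'' will arrange this, but that is the entire content of the cited result, not routine bookkeeping: for every value $c\notin\{x,y,z\}$ you must decide whether it sits to the left or the right of the triple in $u$, the choices are constrained (a value $c<x$ lying between $y$ and $x$ in $w$ cannot go to the right of the triple; a value $c>z$ lying after $z$ in $w$ cannot go to the left), and the choices must be pairwise consistent so that no inversion outside $\mathrm{Inv}(w)$ is created among the displaced values. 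A workable rule does exist---for instance, send $c$ to the left exactly when $c$ precedes, in $w$, the largest element of $\{x,y,z\}$ smaller than $c$ (taking $x$ when $c<x$)---but this rule and the verification that it respects weak order are what must actually be written down. As it stands, the ``if'' direction is asserted rather than proved; either carry out that construction explicitly or do as the paper does and cite \cite[Theorem~6.4]{tenner rdpp}.
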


We saw this demonstrated in Figure~\ref{fig:34251}.

\begin{example}
The permutation $34251$ has $321$-patterns $321$ and $421$. The first two tilings in Figure~\ref{fig:34251} have a subhexagon labeled $\{3,2,1\}$, and the last two tilings in Figure~\ref{fig:34251} have a subhexagon labeled $\{4,2,1\}$.
\end{example}

It follows from Elnitsky's work that if $w$ is $321$-avoiding, then $X(w)$ has exactly one rhombic tiling, meaning that all of its perimeter tiles are (trivially) forced. On the other hand, the converse does not hold: a permutation can have all of its perimeter tiles be forced and yet contain the pattern $321$.

\begin{example}
The permutation $3614725$ has a $321$-pattern and $|T(3614725)| = 2$, but all of its perimeter tiles are fixed. This is illustrated in Figure~\ref{fig:3614725}.
\end{example}

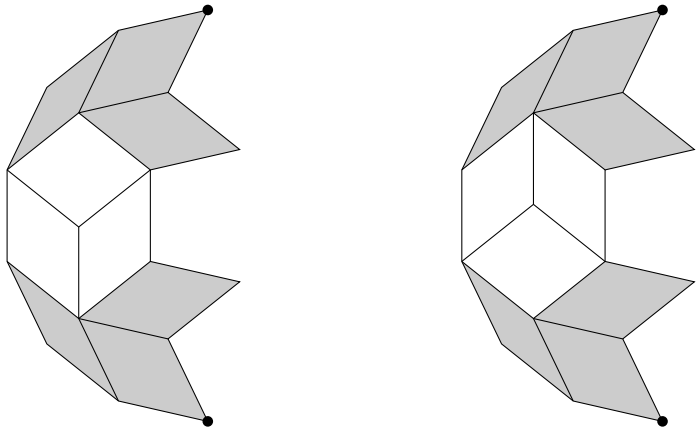
\begin{figure}[htbp]
\begin{tikzpicture}
\begin{scope}%%
\clip (.2,2.8) rectangle (-2.8,-2.8);
\node[draw=none,minimum size=5.6cm,regular polygon,regular polygon sides=14] (a) {};
\end{scope}%%
\foreach \x in {1,2,3,4,5,6,7,8} {\coordinate (corner \x) at (a.side \x);}
\foreach \y [evaluate=\y as \x using \y+1] in {1,2,3,4,5,6,7} {\coordinate (side \y) at ($(corner \x)-(corner \y)$);}
\foreach \y [evaluate=\y as \x using \y+1] in {1,2,3,4,5,6,7} {\coordinate (side -\y) at ($(corner \y)-(corner \x)$);}
\fill[black!20] (corner 1) --++(side 1) --++(side 2) --++(side 3) --++(side -2) --++(side 6) --++(side -1) --++(side -6) --++(side -3);
\fill[black!20] (corner 5) --++(side 5) --++(side 6) --++(side 7) --++(side -5) --++(side -2) --++(side -7) --++(side 2) --++(side -6);
\foreach \x in {(a.side 1),(a.side 8)} {\fill \x circle (2pt);}
\draw (corner 1) -- (corner 2) -- (corner 3) -- (corner 4) -- (corner 5) -- (corner 6) -- (corner 7) -- (corner 8);
\draw (corner 1) --++(side 3) coordinate (corner 14) --++(side 6) coordinate (corner 13) --++(side 1) coordinate (corner 12) --++(side 4) coordinate (corner 11) --++(side 7) coordinate (corner 10) --++(side 2) coordinate (corner 9) --++(side 5);
\draw (corner 2) --++(side 3) --++(side 6);
\draw (corner 4) --++(side -2) --++(side -1);
\draw (corner 5) --++(side 6) --++(side 7);
\draw (corner 7) --++(side -5) --++(side -2);
\draw (corner 4) --++(side 6) --++(side 4);
\draw (corner 12) --++(side 2);
\end{tikzpicture}
\hspace{1in}
\begin{tikzpicture}
\begin{scope}%%
\clip (.2,2.8) rectangle (-2.8,-2.8);
\node[draw=none,minimum size=5.6cm,regular polygon,regular polygon sides=14] (a) {};
\end{scope}%%
\foreach \x in {1,2,3,4,5,6,7,8} {\coordinate (corner \x) at (a.side \x);}
\foreach \y [evaluate=\y as \x using \y+1] in {1,2,3,4,5,6,7} {\coordinate (side \y) at ($(corner \x)-(corner \y)$);}
\foreach \y [evaluate=\y as \x using \y+1] in {1,2,3,4,5,6,7} {\coordinate (side -\y) at ($(corner \y)-(corner \x)$);}
\fill[black!20] (corner 1) --++(side 1) --++(side 2) --++(side 3) --++(side -2) --++(side 6) --++(side -1) --++(side -6) --++(side -3);
\fill[black!20] (corner 5) --++(side 5) --++(side 6) --++(side 7) --++(side -5) --++(side -2) --++(side -7) --++(side 2) --++(side -6);
\foreach \x in {(a.side 1),(a.side 8)} {\fill \x circle (2pt);}
\draw (corner 1) -- (corner 2) -- (corner 3) -- (corner 4) -- (corner 5) -- (corner 6) -- (corner 7) -- (corner 8);
\draw (corner 1) --++(side 3) coordinate (corner 14) --++(side 6) coordinate (corner 13) --++(side 1) coordinate (corner 12) --++(side 4) coordinate (corner 11) --++(side 7) coordinate (corner 10) --++(side 2) coordinate (corner 9) --++(side 5);
\draw (corner 2) --++(side 3) --++(side 6);
\draw (corner 4) --++(side -2) --++(side -1);
\draw (corner 5) --++(side 6) --++(side 7);
\draw (corner 7) --++(side -5) --++(side -2);
\draw (corner 5) --++(side -2) --++(side -4);
\draw (corner 11) --++(side -6);
\end{tikzpicture}
\caption{The two rhombic tilings of Elnitsky's polygon for the $321$-containing permutation $3614725$. The (identical) perimeter tiles in each tiling have been shaded.}
\label{fig:3614725}
\end{figure}

The relationship between forced perimeter tiles and $321$-patterns can also be phrased in terms of left-to-right and right-to-left maxima and minima. We will refer to these as \emph{LR-} or \emph{RL-max} or \emph{min}.

\begin{theorem}\label{thm:force right}
Fix a permutation $w = \cdots xy \cdots$ with $x > y$. There is a forced right-perimeter tile $\{x,y\}$ if and only if $x$ and $y$ do not appear in a $321$-pattern together; equivalently, if and only if $x$ is a LR-max and $y$ is a RL-min.
\end{theorem}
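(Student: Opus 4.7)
The plan is to reduce each direction of the biconditional to a statement about $321$-patterns via Proposition~\ref{prop:hexagon}, after first disposing of the equivalence of the two stated conditions. That equivalence follows from the adjacency of $x$ and $y$: any $321$-pattern containing both must place its third element either at a position $<i$ with value exceeding $x$ (so $x$ is not a LR-max) or at a position $>i+1$ with value less than $y$ (so $y$ is not a RL-min), since no position lies strictly between $i$ and $i+1$.

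For the forward direction I argue by contrapositive. Suppose $x$ and $y$ appear together in a $321$-pattern; without loss of generality some $z > x$ sits at a position $<i$, so $zxy$ is a $321$-pattern in $w$ (the symmetric case with $a < y$ at a position $>i+1$ is handled identically). By Proposition~\ref{prop:hexagon}, some tiling $T \in T(w)$ contains a subhexagon with sides labeled $\{y,x,z\}$, whose three tiles include $\{x,y\}$. Performing the hexagon flip produces a tiling $T'$ in which $\{x,y\}$ occupies a different region of $X(w)$. Since $\{x,y\}$ can be a right-perimeter tile only by covering the right-boundary notch at positions $(i, i+1)$---a single fixed geometric location---it is a right-perimeter tile in at most one of $T$ and $T'$, so it is not forced.

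For the backward direction, assume $x$ is a LR-max and $y$ is a RL-min, so by the equivalence $\{x,y\}$ lies in no $321$-pattern. By Proposition~\ref{prop:hexagon}, the tile $\{x,y\}$ is never part of a subhexagon in any tiling of $X(w)$. Since any two tilings of $X(w)$ are connected by a sequence of hexagon flips and each flip only rearranges the tiles inside its hexagonal region, $\{x,y\}$ occupies the same geometric position in every tiling. It remains to exhibit one tiling where $\{x,y\}$ is a right-perimeter tile: set $w' = ws_i$, so that $\ell(w') = \ell(w)-1$ because $w(i) > w(i+1)$, and observe that $X(w)$ is obtained from $X(w')$ by attaching a rhombus labeled $\{x,y\}$ at the right notch between positions $i$ and $i+1$. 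Extending any tiling of $X(w')$ by this rhombus yields a tiling of $X(w)$ in which $\{x,y\}$ is a right-perimeter tile, completing the proof.

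The main obstacle is verifying that the hexagon flip in the forward direction truly relocates $\{x,y\}$ geometrically (not just up to relabeling), so that the ``at most one of $T, T'$'' step is airtight; this should follow from direct inspection of the two subhexagon configurations and may deserve a line of justification in the final write-up.
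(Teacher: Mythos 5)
Your proposal is correct, but your proof of the converse takes a genuinely different route from the paper's. The forward direction is essentially the paper's own argument: Proposition~\ref{prop:hexagon} supplies a subhexagon on $\{x,y,z\}$, the tile $\{x,y\}$ occupies one of two opposite corners of that hexagon depending on which of its two internal tilings is used, and since (by Corollary~\ref{cor:labels} and the adjacency of $x,y$ in $w$) the tile $\{x,y\}$ can be a right-perimeter tile only at the single notch where the boundary edges labeled $x$ and $y$ meet, it fails to be one in at least one of $T,T'$; your flagged verification that the flip genuinely relocates the tile is easy and fine. For the converse, however, the paper argues locally and needs no connectivity statement: in an arbitrary tiling, the tile glued to the right-boundary edge labeled $x$ is some $\{x,a\}$ which can fit inside $X(w)$ only if $a>x$ or $a\le y$, and the LR-max/RL-min hypotheses combined with Corollary~\ref{cor:labels} eliminate everything except $a=y$. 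You instead argue globally: since $\{x,y\}$ lies in no $321$-pattern it is never a tile of any subhexagon, so its geometric position is invariant under hexagon flips, and then flip-connectivity of $T(w)$ together with Elnitsky's peeling step ($X(w)$ is $X(ws_i)$ with an $\{x,y\}$-rhombus attached at the notch) pins it to the notch in every tiling. This is valid, but it imports the fact that any two rhombic tilings of $X(w)$ are connected by hexagon flips, which this paper never states; you should cite it explicitly (it follows from Elnitsky's bijection with commutation classes together with Tits' theorem on reduced words), and you should also remark that $ws_i$ need not be fully supported, so one needs the easy observation that $X(ws_i)$ (possibly a pinched region) still admits a rhombic tiling. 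What your route buys is a reusable rigidity principle---any tile that never lies in a subhexagon sits in the same position in every tiling---which is stronger than the theorem and is relevant to the forced non-perimeter tiles mentioned in Section~\ref{sec:further}; what the paper's route buys is a shorter, self-contained argument using only Corollary~\ref{cor:labels}.
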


\begin{proof}
First suppose $\{x,y,z\}$ form a $321$-pattern in $w$. Then, by Proposition~\ref{prop:hexagon}, there is a tiling $T \in T(w)$ with a subhexagon whose edges are labeled $\{x,y,z\}$. As demonstrated in Figure~\ref{fig:321}, this hexagon will always have a tile $\{x,y\}$, but this is not always a right-perimeter tile of the hexagon. Then, by Corollary~\ref{cor:labels}, there exists a rhombic tiling of $X(w)$ in which the tile $\{x,y\}$ is not a right-perimeter tile.

Now suppose that $x = w(k)$ is a LR-max and $y = w(k+1)$ is a RL-min, and consider some $T \in T(w)$. The segment labeled $x$ along the rightside boundary of $X(w)$ is the edge of some $\{x,a\} \in T$. In order to fit inside of $X(w)$, we must have either $a > x$ or $a \le y$. Suppose that $a > x$. Because $x$ is a LR-max, we must have $a = w(h)$ for some $h > k$. But then a tile labeled $\{x,a\}$ would violate Corollary~\ref{cor:labels}. A similar argument shows that $a \not< y$. Therefore $a = y$, so the right-perimeter tile $\{x,y\}$ is forced.
\end{proof}

Reduced decompositions of $w$ and of its inverse $w^{-1}$ are left-to-right reflections of each other. Therefore, rhombic tilings of $X(w^{-1})$ can be obtained from rhombic tilings of $X(w)$.

\begin{lemma}[{cf. \cite{elnitsky}}]\label{lem:left and right symmetry}
Let $\tau : T(w) \rightarrow T(w^{-1})$ be the map that takes a left-to-right reflection of $T \in T(w)$ and deforms the edges so that the new leftside boundary is convex and all edges with the same label are made to be congruent. This $\tau$ is a bijection.
\end{lemma}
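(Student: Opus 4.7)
The plan is to realize $\tau$ as a composition of three elementary operations, verify that each step produces valid data, and then write down the inverse.

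First, I would decompose $\tau$ into three stages applied to $T \in T(w)$: (i) a rigid reflection of $X(w)$ together with its tiling $T$ across a vertical axis; (ii) a bijective relabeling of every edge carrying label $\ell$ to $w^{-1}(\ell)$; and (iii) a continuous deformation of the planar region so that the new leftside boundary becomes convex and all edges sharing a label become congruent. Step (iii) is a homeomorphism preserving parallelism classes and tile combinatorics, and steps (i) and (ii) are each obvious bijections on the ambient data, so the only substantive content is that the output is an Elnitsky polygon for $w^{-1}$ together with a rhombic tiling thereof.

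Next, I would verify that the output is an element of $T(w^{-1})$. Reading counterclockwise from the top vertex of the reflected polygon (before relabeling) produces the sequence $w(1), w(2), \ldots, w(n), n, n-1, \ldots, 1$, so after step (ii) the sequence becomes $1, 2, \ldots, n, w^{-1}(n), \ldots, w^{-1}(1)$, which is precisely the label sequence prescribed by the definition of $X(w^{-1})$. Because step (ii) applies a single bijection of $\{1, \ldots, n\}$ uniformly to all edges, parallel classes are preserved and each tile remains a rhombus. Corollary~\ref{cor:labels} applied to $w$ tells us that a tile of $T$ labeled $\{a, b\}$ with $a < b$ exists exactly when $w^{-1}(a) > w^{-1}(b)$; writing $c = w^{-1}(b)$ and $d = w^{-1}(a)$ so that $c < d$, we have $w(c) = b > a = w(d)$, so the relabeled tile $\{c, d\}$ is a legitimate tile of $X(w^{-1})$ by another application of Corollary~\ref{cor:labels}.

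Finally, I would exhibit the inverse by applying the same recipe with the roles of $w$ and $w^{-1}$ exchanged: reflect, relabel each $\ell$ by $w(\ell)$, then deform. Two reflections cancel, and the relabelings $\ell \mapsto w^{-1}(\ell)$ and $\ell \mapsto w(\ell)$ are mutually inverse, so the compositions restore every tile to its original position and labels, yielding the identities on $T(w)$ and $T(w^{-1})$. The main obstacle here is purely bookkeeping---tracking labels through the reflection so that the final polygon genuinely agrees with $X(w^{-1})$ in this paper's labeling convention---and once that is handled, bijectivity of $\tau$ follows at once.
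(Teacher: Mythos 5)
Your proposal is correct in outline but takes a genuinely different route from the paper. The paper does not argue geometrically at all: it leans on Elnitsky's correspondence \cite{elnitsky} between rhombic tilings of $X(w)$ and commutation classes of reduced decompositions of $w$, together with the observation (stated just before the lemma) that reduced decompositions of $w$ and $w^{-1}$ are left-to-right reflections (reversals) of one another; word reversal is an involution respecting commutation classes, and under Elnitsky's bijection it is exactly $\tau$, so bijectivity is immediate. You instead verify the map directly: reflect, relabel every edge label $\ell$ as $w^{-1}(\ell)$, deform, check that the boundary word becomes $1,\ldots,n,w^{-1}(n),\ldots,w^{-1}(1)$, check via Corollary~\ref{cor:labels} that tile labels transform into inversions of $w^{-1}$, and invert by running the same recipe for $w^{-1}$. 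That is a legitimate and more self-contained argument, and your label bookkeeping is right. The one soft spot is step (iii): you assert that the deformation (reassigning the edge directions so that the new leftside boundary is the standard convex chain, with all same-labeled edges congruent) is a homeomorphism preserving tile combinatorics, i.e.\ that the reassigned rhombi still tile the new polygon without gaps or overlaps. That is precisely the geometric content that the paper's route through reduced words avoids having to certify; a fully rigorous version of your argument should either prove this (for instance by peeling tiles off the rightside boundary inductively, or by the de Bruijn-style strip/parallel-class argument for zonotopal tilings) or simply invoke Elnitsky's tiling--word correspondence at that point, which collapses your proof into the paper's.
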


\begin{example}\label{ex:34251 and inverse}
The polygons $X(34251)$ and $X(53124)$, where $53124 = 34251^{-1}$, are depicted in Figure~\ref{fig:34251 and inverse}, along with a pair of corresponding tilings. 
\end{example}

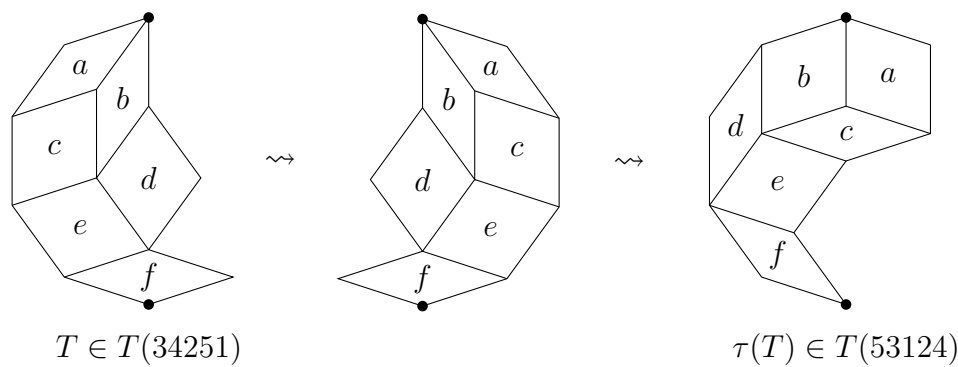
\begin{figure}[htbp]
\begin{tikzpicture}
\begin{scope}%%
\clip (.2,2) rectangle (-2,-2);
\node[draw=none,minimum size=4cm,regular polygon,regular polygon sides=10] (a) {};
\end{scope}%%
\foreach \x in {1,2,3,4,5,6} {\coordinate (corner \x) at (a.side \x);}
\foreach \y [evaluate=\y as \x using \y+1] in {1,2,3,4,5} {\coordinate (side \y) at ($(corner \x)-(corner \y)$);}
\foreach \y [evaluate=\y as \x using \y+1] in {1,2,3,4,5} {\coordinate (side -\y) at ($(corner \y)-(corner \x)$);}
\foreach \x in {(a.side 1),(a.side 6)} {\fill \x circle (2pt);}
\draw (corner 1) -- (corner 2) -- (corner 3) -- (corner 4) -- (corner 5) -- (corner 6);
\draw (corner 1) --++(side 3) coordinate (corner 10) --++(side 4) coordinate (corner 9) --++(side 2) coordinate (corner 8) --++(side 5) coordinate (corner 7) --++(side 1);
\draw (corner 5) -- (corner 8);
\draw (corner 1) --++(side 2) --++(side 3) coordinate (corner a)--++(side -2);
\draw (corner 3) --++(side -1);
\draw (corner 4) --++(side -1) --++(side 4);
\draw (1.75,0) node {$\rightsquigarrow$};
\draw (0,-2.5) node {$T \in T(34251)$};
\draw ($(corner 1)!.5!(corner 3)$) node {$a$};
\draw ($(corner 1)!.5!(corner a)$) node {$b$};
\draw ($(corner 3)!.5!(corner a)$) node {$c$};
\draw ($(corner 8)!.5!(corner 10)$) node {$d$};
\draw ($(corner 4)!.5!(corner 8)$) node {$e$};
\draw ($(corner 5)!.5!(corner 7)$) node {$f$};
\end{tikzpicture}
\begin{tikzpicture}
\begin{scope}%%
\clip (-.2,2) rectangle (2,-2);
\node[draw=none,minimum size=4cm,regular polygon,regular polygon sides=10] (a) {};
\end{scope}%%
\foreach \x in {1,10,9,8,7,6} {\coordinate (corner \x) at (a.side \x);}
\foreach \y [evaluate=\y as \x using \y-1] in {6,7,8,9,10} {\coordinate (side \y) at ($(corner \x)-(corner \y)$);}
\foreach \y [evaluate=\y as \x using \y-1] in {6,7,8,9,10} {\coordinate (side -\y) at ($(corner \y)-(corner \x)$);}
\foreach \x in {(a.side 1),(a.side 6)} {\fill \x circle (2pt);}
\draw (corner 1) -- (corner 10) -- (corner 9) -- (corner 8) -- (corner 7) -- (corner 6);
\draw (corner 1) --++(side 3) coordinate (corner 2) --++(side 2) coordinate (corner 3) --++(side 4) coordinate (corner 4) --++(side 1) coordinate (corner 5) --++(side 5);
\draw (corner 7) -- (corner 4);
\draw (corner 1) --++(side 4) --++(side 3) coordinate (corner a) --++(side -4);
\draw (corner 9) --++(side -5);
\draw (corner 8) --++(side -5) --++(side 2);
\draw (2.75,0) node {$\rightsquigarrow$};
\draw[white] (0,-2.5) node {rhombic tiling};
\draw ($(corner 1)!.5!(corner 9)$) node {$a$};
\draw ($(corner 1)!.5!(corner a)$) node {$b$};
\draw ($(corner a)!.5!(corner 9)$) node {$c$};
\draw ($(corner 3)!.5!(corner a)$) node {$d$};
\draw ($(corner 4)!.5!(corner 8)$) node {$e$};
\draw ($(corner 5)!.5!(corner 7)$) node {$f$};
\end{tikzpicture}
\hspace{.1in}
\begin{tikzpicture}
\begin{scope}%%
\clip (.2,2) rectangle (-2,-2);
\node[draw=none,minimum size=4cm,regular polygon,regular polygon sides=10] (a) {};
\end{scope}%%
\foreach \x in {1,2,3,4,5,6} {\coordinate (corner \x) at (a.side \x);}
\foreach \y [evaluate=\y as \x using \y+1] in {1,2,3,4,5} {\coordinate (side \y) at ($(corner \x)-(corner \y)$);}
\foreach \y [evaluate=\y as \x using \y+1] in {1,2,3,4,5} {\coordinate (side -\y) at ($(corner \y)-(corner \x)$);}
\foreach \x in {(a.side 1),(a.side 6)} {\fill \x circle (2pt);}
\draw (corner 1) -- (corner 2) -- (corner 3) -- (corner 4) -- (corner 5) -- (corner 6);
\draw (corner 2) --++(side 3) coordinate (corner a) --++(side 2);
\draw (corner 1) --++(side 3) --++(side 5) coordinate (corner 9)--++(side -3) (corner 10)--++(side -5);
\draw (corner a) --++(side -1);
\draw (corner 4) --++(side 5) coordinate (corner 7) --++(side 4);
\draw (corner a) --++(side 5) coordinate (corner 8) coordinate --++(side 2);
\draw (corner 8) --++(side -1);
\draw (0,-2.5) node {$\tau(T) \in T(53124)$};
\draw ($(corner 1)!.5!(corner 9)$) node {$a$};
\draw ($(corner 1)!.5!(corner a)$) node {$b$};
\draw ($(corner a)!.5!(corner 9)$) node {$c$};
\draw ($(corner 2)!.5!(corner 4)$) node {$d$};
\draw ($(corner 4)!.5!(corner 8)$) node {$e$};
\draw ($(corner 5)!.5!(corner 7)$) node {$f$};
\end{tikzpicture}
\caption{Rhombic tilings of Elnitsky polygons for $w$ and $w^{-1}$ are related by reflection and deformation. To clarify the transformation in this example, the tiles in the first figure have been labeled and their images have been labeled correspondingly in the second and third figure.}
\label{fig:34251 and inverse}
\end{figure}

We can combine Theorem~\ref{thm:force right} and Lemma~\ref{lem:left and right symmetry} to characterize forced left-perimeter tiles.

\begin{corollary}\label{cor:force left}
Fix a permutation $w$. There is a forced left-perimeter tile $\{k,k+1\}$ if and only if $w^{-1}(k) > w^{-1}(k+1)$, and $k$ and $k+1$ do not appear in a $321$-pattern together. This is equivalent to $k+1$ being a LR-max in $w$ and $k$ being a RL-min.
\end{corollary}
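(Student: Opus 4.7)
The plan is to deduce the corollary from Theorem~\ref{thm:force right} applied to $w^{-1}$, transported back via the bijection $\tau$ of Lemma~\ref{lem:left and right symmetry}. First I would verify that $\tau$ sends left-perimeter tiles of $X(w)$ to right-perimeter tiles of $X(w^{-1})$ and identify exactly what happens to the labels. The leftside boundary of $X(w)$ has edges labeled $1, 2, \ldots, n$ from top to bottom, so a left-perimeter tile necessarily carries labels $\{k, k+1\}$ for some $k$. Under the left-to-right reflection followed by deformation, the tile's two boundary edges become the rightside boundary edges of $X(w^{-1})$ at positions $k$ and $k+1$ from the top; since the rightside boundary of $X(w^{-1})$ reads $w^{-1}(1), w^{-1}(2), \ldots, w^{-1}(n)$ from top to bottom, the image tile carries labels $\{w^{-1}(k), w^{-1}(k+1)\}$, and by Corollary~\ref{cor:labels} its existence requires $w^{-1}(k) > w^{-1}(k+1)$. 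This accounts for the descent condition in the statement.

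Assuming that descent, I would apply Theorem~\ref{thm:force right} to $w^{-1}$ at positions $k$ and $k+1$: the tile $\{w^{-1}(k), w^{-1}(k+1)\}$ is a forced right-perimeter tile of $X(w^{-1})$ iff $w^{-1}(k)$ is an LR-max and $w^{-1}(k+1)$ is an RL-min of $w^{-1}$. Because $\tau$ is a bijection, this is equivalent to $\{k, k+1\}$ being a forced left-perimeter tile of $X(w)$.

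Finally I would translate the extremal conditions back to $w$. A short contrapositive argument shows that ``$k+1$ is an LR-max of $w$'' and ``$w^{-1}(k+1)$ is an RL-min of $w^{-1}$'' both assert that no value exceeding $k+1$ appears in $w$ before $k+1$, and symmetrically ``$k$ is an RL-min of $w$'' matches ``$w^{-1}(k)$ is an LR-max of $w^{-1}$''. The $321$-pattern reformulation in $w$ is then immediate from the fact that $k$ and $k+1$ are consecutive integers: any $321$-pattern containing both must take the form $(m, k+1, k)$ with $m > k+1$ or $(k+1, k, m)$ with $m < k$, and these are ruled out precisely by $k+1$ being an LR-max and $k$ being an RL-min, respectively. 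The main obstacle is essentially bookkeeping, namely tracking how tile labels transform under the reflection-and-deformation $\tau$ and confirming that the LR/RL-extremal conditions for $w$ and $w^{-1}$ align as claimed.
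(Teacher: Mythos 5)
Your proposal is correct and follows essentially the same route as the paper: transport the left-perimeter tile $\{k,k+1\}$ to the right-perimeter tile $\{w^{-1}(k),w^{-1}(k+1)\}$ of $X(w^{-1})$ via Lemma~\ref{lem:left and right symmetry}, apply Theorem~\ref{thm:force right}, and translate the $321$/LR-max/RL-min conditions back to $w$. Your extra bookkeeping of labels under $\tau$ and the explicit form of a $321$-pattern containing the consecutive values $k$ and $k+1$ just fills in details the paper leaves implicit.
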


\begin{proof}
By Lemma~\ref{lem:left and right symmetry}, forcing the left-perimeter tile $\{k,k+1\}$ for $w$ is equivalent to forcing the right-perimeter tile $\{w^{-1}(k),w^{-1}(k+1)\}$ in $w^{-1}$. By Theorem~\ref{thm:force right}, this is equivalent to $w^{-1}(k)$ and $w^{-1}(k+1)$ not appearing in a $321$-pattern together in $w^{-1}$, which is equivalent to $k$ and $k+1$ not appearing in a $321$-pattern together in $w$.
\end{proof}

Forcing top- and bottom-perimeter tiles has a similar flavor to Theorem~\ref{thm:force right} and Corollary~\ref{cor:force left}.

\begin{theorem}\label{thm:force top}
Fix a permutation $w$. There is a forced top-perimeter tile, necessarily $\{1,w(1)\}$, if and only if $w(1)$ and $1$ do not appear in a $321$-pattern together; equivalently, if and only if the first LR-min after $w(1)$ is $1$.
\end{theorem}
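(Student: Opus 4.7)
The plan is to parallel the structure of the proof of Theorem~\ref{thm:force right}, using that the two sides of $X(w)$ labeled $1$ and $w(1)$ meet only at the top vertex, so any top-perimeter tile is necessarily $\{1,w(1)\}$. I would first settle the pattern--LR-min equivalence: since $w(1)$ sits at position $1$ and $1$ is the global minimum, any $321$-pattern involving both must take the form $w(1),c,1$ with $1<c<w(1)$ and $c$ at a position in $\{2,\ldots,w^{-1}(1)-1\}$. Such a $c$ exists if and only if some value at one of those positions is strictly less than $w(1)$---equivalently, the running minimum drops below $w(1)$ before position $w^{-1}(1)$---which is exactly the failure of the first LR-min after $w(1)$ to equal $1$.

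For the reverse implication (no $321$-pattern $\Rightarrow$ forced), I would fix any $T\in T(w)$ and consider the unique tile $\{1,a\}\in T$ whose edge lies along polygon side $1$. A geometric analysis at the top vertex---transposing the ``$a>x$ or $a\le y$'' dichotomy from the proof of Theorem~\ref{thm:force right} to the top corner---constrains $a$ to lie in $\{2,\ldots,w(1)\}$. If $a<w(1)$, then applying Corollary~\ref{cor:labels} to the tile $\{1,a\}$ gives $w^{-1}(a)<w^{-1}(1)$, and since $a\ne w(1)$ we also have $w^{-1}(a)>1$; together these produce the forbidden $321$-pattern $w(1),a,1$. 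Therefore $a=w(1)$, and $\{1,w(1)\}$ is a top-perimeter tile of $T$.

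For the forward implication I would argue the contrapositive. Given a $321$-pattern $w(1),c,1$, Proposition~\ref{prop:hexagon} supplies a tiling $T^{*}\in T(w)$ containing a subhexagon with edge-labels $\{1,c,w(1)\}$; its three interior tiles are $\{1,c\}$, $\{c,w(1)\}$, and $\{1,w(1)\}$. Performing the hexagon flip on this subhexagon produces a second tiling in $T(w)$ in which the (unique, by Corollary~\ref{cor:labels}) tile $\{1,w(1)\}$ occupies a different position within the subhexagon than in $T^{*}$. At most one of these two positions can coincide with the top vertex of $X(w)$, so at least one of the two tilings places $\{1,w(1)\}$ away from the top vertex---the only location at which it could serve as a top-perimeter tile---and hence the top-perimeter tile is not forced. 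The step I expect to be the main obstacle is justifying the geometric inequality $a\le w(1)$ in the reverse direction: I would argue it from the fact that the directions of sides $1,2,\ldots,n$ of $X(w)$ rotate monotonically along the convex upper half, so the interior angle at the top vertex is spanned exactly by the directions parallel to sides $1,2,\ldots,w(1)$, and any tile edge emanating inward from the top vertex must be parallel to one of these.
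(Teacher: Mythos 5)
Your proposal is correct and takes essentially the same approach as the paper: the forward direction uses Proposition~\ref{prop:hexagon} to realize a tiling in which the (unique) tile $\{1,w(1)\}$ sits inside a subhexagon away from the top vertex, and the reverse direction identifies the tile on a boundary edge at the top vertex and forces its second label via Corollary~\ref{cor:labels} together with the LR-min/pattern condition. The only cosmetic difference is that you work from the side labeled $1$ (making the angular-cone bound $a\le w(1)$ explicit), whereas the paper works from the side labeled $w(1)$ and reaches the same contradiction.
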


\begin{proof}
First suppose that $\{w(1), k, 1\}$ is a $321$-pattern in $w$. Then, as before, Proposition~\ref{prop:hexagon} means that there is a rhombic tiling of $X(w)$ with a subhexagon whose edges are labeled $\{w(1),k,1\}$, in which the tile $\{1,w(1)\}$ is not a top-perimeter tile. Thus, no matter where this hexagon is positioned in the polygon $X(w)$, the resulting tiling of $X(w)$ has no top-perimeter tile.

Now suppose that the first LR-min after $w(1)$ is $1$, and consider some $T \in T(w)$. The segment labeled $w(1)$ along the rightside boundary of $X(w)$ is the edge of some $\{w(1),a\} \in T$. By Corollary~\ref{cor:labels}, we must have $a < w(1)$. If $a > 1$ then, because $1$ is the first LR-min after $w(1)$, we must have a tile $\{1,a\}$ in $T$. However, this would contradict Corollary~\ref{cor:labels}, so $a = 1$. Therefore the top-perimeter tile $\{w(1),1\}$ is forced.
\end{proof}

The conditions for a forced bottom-perimeter tile are analogous.

\begin{corollary}\label{cor:force bottom}
Fix a permutation $w \in \mf{S}_n$. There is a forced bottom-perimeter tile, necessarily $\{n,w(n)\}$, if and only if $w(n)$ and $n$ do not appear in a $321$-pattern together; equivalently, if and only if the first RL-max after $w(n)$ is $n$.
\end{corollary}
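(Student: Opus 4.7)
The plan is to deduce this from Theorem~\ref{thm:force top} via the top--bottom symmetry of Elnitsky polygons, in direct analogy with how Corollary~\ref{cor:force left} was deduced from Theorem~\ref{thm:force right}. For $w \in \mf{S}_n$, set $v := w_0 w w_0$ with $w_0$ the longest element, so that $v(i) = n+1-w(n+1-i)$. I would first establish a bijection $\sigma : T(w) \to T(v)$, analogous to $\tau$ from Lemma~\ref{lem:left and right symmetry}, obtained by reflecting $X(w)$ across a horizontal axis and then relabeling each side $k$ by $n+1-k$. Under $\sigma$, a tile with labels $\{a,b\}$ becomes a tile with labels $\{n+1-a, n+1-b\}$, and bottom-perimeter tiles of $X(w)$ correspond to top-perimeter tiles of $X(v)$. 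In particular, $\{n,w(n)\}$ is a forced bottom-perimeter tile for $w$ if and only if $\{1,v(1)\}$ is a forced top-perimeter tile for $v$.

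Applying Theorem~\ref{thm:force top} to $v$, the latter holds iff $v(1)$ and $1$ do not lie together in any $321$-pattern of $v$, equivalently iff the first LR-min of $v$ after $v(1)$ is $1$. The map $w \mapsto v$ is the composition of reversal and complementation of the one-line notation, each of which fixes the pattern $321$; thus $321$-patterns in $w$ correspond to $321$-patterns in $v$ via $k \mapsto n+1-k$ on values and $i \mapsto n+1-i$ on positions. Under this correspondence, the pair $(v(1),1)$ in $v$ matches the pair $(w(n),n)$ in $w$, and LR-minima of $v$ at positions greater than $1$ match RL-maxima of $w$ at positions less than $n$, with value $1$ in $v$ corresponding to value $n$ in $w$. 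Translating gives the two stated equivalent conditions on $w$.

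The main obstacle is to verify cleanly that $\sigma$ really lands in $T(v)$---that is, that reflecting $X(w)$ vertically and complementing side labels produces exactly the Elnitsky polygon for $v$ (with the convex half still read $1, \ldots, n$ from top to bottom), and that the rhombic-tiling structure is preserved. Once that is in hand, the rest is a routine translation through the involution $w \mapsto v$. As an alternative, if one prefers to avoid formalizing this bijection, one can simply mirror the proof of Theorem~\ref{thm:force top} at the bottom vertex of $X(w)$, working with the segment labeled $n$ at the bottom of the leftside boundary and invoking Proposition~\ref{prop:hexagon} and Corollary~\ref{cor:labels} in the symmetric way; this avoids any top--bottom machinery but duplicates the earlier geometric analysis.
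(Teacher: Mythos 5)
Your proposal is correct and matches the paper's intended route: the paper gives no explicit proof, stating only that the conditions are ``analogous'' to Theorem~\ref{thm:force top}, and your reverse--complement symmetry $w \mapsto w_0ww_0$ (or the direct mirroring of that proof at the bottom vertex, which you offer as an alternative) is exactly the formalization of that analogy. The pattern-translation details you give (321 is preserved under reverse--complement, the pair $(v(1),1)$ corresponds to $(w(n),n)$, and LR-minima of $v$ correspond to RL-maxima of $w$) are all accurate.
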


There is a relationship between right-/left- and top-/bottom-perimeter tiles.

\begin{corollary}\label{cor:right at top means top}
Fix a permutation $w \in \mf{S}_n$. If there is a forced right-perimeter tile $\{w(1),w(2)\}$, then this is a forced top-perimeter tile, too. If there is a forced left-perimeter tile $\{1,2\}$, then this is a forced top-perimeter tile, too. If there is a forced right-perimeter tile $\{w(n-1),w(n)\}$, then this a forced bottom-perimeter tile, too. If there is a forced left-perimeter tile $\{n-1,n\}$, then this is a forced bottom-perimeter tile, too.
\end{corollary}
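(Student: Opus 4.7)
The approach is to handle each of the four implications separately, showing in each case that the hypothesized forced left- or right-perimeter tile is literally the top (or bottom) corner tile of $X(w)$, after which the $321$-avoidance condition required by Theorem~\ref{thm:force top} or Corollary~\ref{cor:force bottom} is inherited directly from the hypothesis.

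For the first implication, suppose $\{w(1),w(2)\}$ is a forced right-perimeter tile. By Theorem~\ref{thm:force right}, $w(1) > w(2)$ and $w(2)$ is a RL-min. A RL-min that is also smaller than $w(1)$ is necessarily the global minimum of $w$, so $w(2) = 1$. Thus the tile in question is $\{1,w(1)\}$, which is precisely the top-corner tile, and the hypothesis that $w(1)$ and $w(2) = 1$ do not lie in a common $321$-pattern is exactly the criterion of Theorem~\ref{thm:force top}.

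For the second implication, suppose $\{1,2\}$ is a forced left-perimeter tile. By Corollary~\ref{cor:force left}, $2$ is a LR-max in $w$. Since the only value smaller than $2$ is $1$, the element $2$ can only occupy position $1$ or position $2$, with the latter forcing $w(1) = 1$; the full-support hypothesis rules this out, so $w(1) = 2$. Hence $\{1,2\} = \{1,w(1)\}$ is the top-corner tile, and the non-$321$ condition from Corollary~\ref{cor:force left} is precisely what Theorem~\ref{thm:force top} demands. The third and fourth implications are mirror-image arguments: in case three, Theorem~\ref{thm:force right} gives that $w(n-1)$ is a LR-max strictly greater than $w(n)$, forcing $w(n-1) = n$ so that the tile is $\{n,w(n)\}$; in case four, Corollary~\ref{cor:force left} gives that $n-1$ is a RL-min, and since $n$ is the only value exceeding $n-1$, the full-support hypothesis rules out $w(n) = n$ and forces $w(n) = n-1$, so the tile is $\{n,w(n)\}$. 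In both cases the non-$321$ condition is exactly what Corollary~\ref{cor:force bottom} requires.

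The only nonroutine step is the appeal to the standing full-support assumption in cases two and four, which is what rules out the degenerate placements $w(1) = 1$ and $w(n) = n$; the remaining cases are immediate from the extremality built into being a LR-max or RL-min.
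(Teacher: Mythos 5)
Your proof is correct and takes essentially the same route as the paper: use Theorem~\ref{thm:force right} (resp.\ Corollary~\ref{cor:force left}) to pin the tile down as $\{1,w(1)\}$ or $\{n,w(n)\}$, then note that the no-common-$321$-pattern hypothesis is exactly the criterion of Theorem~\ref{thm:force top} or Corollary~\ref{cor:force bottom}. The only cosmetic difference is that in the left-perimeter cases you invoke the standing full-support assumption to exclude $w(1)=1$ and $w(n)=n$, whereas the inversion condition $w^{-1}(k)>w^{-1}(k+1)$ already built into Corollary~\ref{cor:force left} does this directly, and the paper dispatches the two bottom cases simply by symmetry.
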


\begin{proof}
Suppose that $\{w(1),w(2)\}$ is a forced right-perimeter tile. Then, by Theorem~\ref{thm:force right}, $w(2) = 1$. Then, by Theorem~\ref{thm:force top}, $\{w(1),w(2) = 1\}$ is a forced top-perimeter tile. Suppose that $\{1,2\}$ is a forced left-perimeter tile. Then, by Corollary~\ref{cor:force left}, $w(1) = 2$, and so by Theorem~\ref{thm:force top}, $\{1,2\}$ is a forced top-perimeter tile. The latter parts of the result follow by symmetry.
\end{proof}

The converses to the statements in Corollary~\ref{cor:right at top means top} do not hold.

\begin{example}
Consider $w = 2341$. This permutation has a forced top-perimeter tile $\{1,2\}$, but $\{2,3\}$ is not a forced right-perimeter tile.
\end{example}

\section{Optimally forced perimeter}\label{sec:optimizing}

Having characterized how perimeter tiles can be forced in $T(w)$, we close by looking at an optimization problem. To be specific, which permutations have maximally many forced right-perimeter tiles, and how many such permutations exist in $\mf{S}_n$? Lemma~\ref{lem:left and right symmetry} means that analogous questions about left-perimeter tiles can be understood through the answers to these questions.

Consider $w \in \mf{S}_n$. To have maximally many forced right-perimeter tiles would mean $\lfloor n/2 \rfloor$ such tiles. Therefore we will assume $n = 2m$.

\begin{theorem}\label{thm:descents in optimal}
A permutation $w \in \mf{S}_{2m}$ has maximally many forced right-perimeter tiles, if and only if $w(2k-1) > w(2k)$ for all $k$, and
\begin{equation}\label{eqn:alternating}
w(1) < w(3) < \cdots < w(2m-1) \hspace{.25in} \text{and} \hspace{.25in} w(2) < w(4) < \cdots < w(2m).
\end{equation}
\end{theorem}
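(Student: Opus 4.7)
The plan is to use Theorem~\ref{thm:force right} to translate the claim into a combinatorial statement about descents. By that theorem, each forced right-perimeter tile of $w$ arises from a descent $w(k) > w(k+1)$ in which $w(k)$ is a LR-max and $w(k+1)$ is a RL-min, and each such tile is determined by its pair of consecutive positions $(k, k+1)$. So having $m$ forced right-perimeter tiles for $w \in \mf{S}_{2m}$ corresponds to $m$ such descents whose position pairs collectively exhaust $\{1, 2, \ldots, 2m\}$.

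The step I expect to be the main obstacle is showing that these $m$ position pairs must be exactly $(1,2), (3,4), \ldots, (2m-1, 2m)$. I would rule out overlapping pairs by the following observation: if both $(k, k+1)$ and $(k+1, k+2)$ supplied forced tiles, then the first would force $w(k+1)$ to be a RL-min (so $w(k+1) < w(k+2)$), contradicting the descent $w(k+1) > w(k+2)$ demanded by the second. Hence the $m$ forced pairs are pairwise disjoint, and the only way to fit $m$ disjoint consecutive pairs into $2m$ positions is $(2k-1, 2k)$ for $k = 1, \ldots, m$. This immediately yields the descent condition $w(2k-1) > w(2k)$.

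For the forward direction $(\Rightarrow)$, with the forced pairs pinned down, Theorem~\ref{thm:force right} tells us each $w(2k-1)$ is a LR-max and each $w(2k)$ is a RL-min. Restricting the LR-max condition to successive odd positions yields $w(1) < w(3) < \cdots < w(2m-1)$, and restricting the RL-min condition to successive even positions yields $w(2) < w(4) < \cdots < w(2m)$, giving \eqref{eqn:alternating}. For the converse $(\Leftarrow)$, I would assume \eqref{eqn:alternating} together with $w(2k-1) > w(2k)$ for all $k$, and verify each $w(2k-1)$ is a LR-max and each $w(2k)$ is a RL-min. The LR-max check comes from the odd alternation (for earlier odd positions) together with the chain $w(2i) < w(2k) < w(2k-1)$ for $i \le k-1$ (for earlier even positions, combining the even alternation with the descent condition). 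The symmetric chain $w(2i-1) > w(2i) > w(2k)$ for $i > k$ handles the RL-min property, and Theorem~\ref{thm:force right} then certifies all $m$ forced right-perimeter tiles.
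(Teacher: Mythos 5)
Your proposal is correct and follows essentially the same route as the paper, which simply cites Corollary~\ref{cor:labels} and Theorem~\ref{thm:force right} for both directions without spelling out the details. Your added argument that the $m$ forced pairs must be pairwise disjoint (and hence occupy positions $(2k-1,2k)$), and your explicit verification of the LR-max/RL-min conditions in both directions, are exactly the steps the paper leaves implicit.
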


\begin{proof}
Suppose that $w$ has maximally many forced right-perimeter tiles. The requirement that $w(2k-1) > w(2k)$ for all $k$ follows from Corollary~\ref{cor:labels}, and the inequalities listed in \eqref{eqn:alternating} follow from Theorem~\ref{thm:force right}.

Now suppose that $w(2k-1) > w(2k)$ for all $k$, and that the inequalities listed in \eqref{eqn:alternating} hold. Corollary~\ref{cor:labels} and Theorem~\ref{thm:force right} mean that there are forced right-perimeter tiles $\{w(2k-1),w(2k)\}$ for all $k$.
\end{proof}

We demonstrate Theorem~\ref{thm:descents in optimal} with two examples.

\begin{example}
The permutation $315264$ satisfies the requirements of Theorem~\ref{thm:descents in optimal}, and hence it has three forced right-perimeter tiles. The permutation $325164$, on the other hand, does not, because $2 \not<1 < 4$. Figure~\ref{fig:optimizing examples} depicts the only rhombic tiling of $X(315264)$, in which there are three (forced) right-perimeter tiles, and a rhombic tiling of $X(325164)$ in which there are only two right-perimeter tiles.
\end{example}

\begin{figure}[htbp]
\begin{tikzpicture}
\begin{scope}%%
\clip (.2,2.4) rectangle (-2.4,-2.4);
\node[draw=none,minimum size=4.8cm,regular polygon,regular polygon sides=12] (a) {};
\end{scope}%%
\foreach \x in {1,2,3,4,5,6,7} {\coordinate (corner \x) at (a.side \x);}
\foreach \y [evaluate=\y as \x using \y+1] in {1,2,3,4,5,6} {\coordinate (side \y) at ($(corner \x)-(corner \y)$);}
\foreach \y [evaluate=\y as \x using \y+1] in {1,2,3,4,5,6} {\coordinate (side -\y) at ($(corner \y)-(corner \x)$);}
\foreach \x in {(a.side 1),(a.side 7)} {\fill \x circle (2pt);}
\draw (corner 1) -- (corner 2) -- (corner 3) -- (corner 4) -- (corner 5) -- (corner 6) -- (corner 7);
\draw (corner 1) --++(side 3) coordinate (corner 12) --++(side 1) coordinate (corner 11) --++(side 5) coordinate (corner 10) --++(side 2) coordinate (corner 9) --++(side 6) coordinate (corner 8) --++(side 4);
\draw (corner 2) --++(side 3) --++(side 2) --++(side 5) --++(side 4);
\draw (-.5,-3) node {The only rhombic};
\draw (-.5,-3.5) node {tiling of $X(315264)$};
\end{tikzpicture}
\hspace{1in}
\begin{tikzpicture}
\begin{scope}%%
\clip (.2,2.4) rectangle (-2.4,-2.4);
\node[draw=none,minimum size=4.8cm,regular polygon,regular polygon sides=12] (a) {};
\end{scope}%%
\foreach \x in {1,2,3,4,5,6,7} {\coordinate (corner \x) at (a.side \x);}
\foreach \y [evaluate=\y as \x using \y+1] in {1,2,3,4,5,6} {\coordinate (side \y) at ($(corner \x)-(corner \y)$);}
\foreach \y [evaluate=\y as \x using \y+1] in {1,2,3,4,5,6} {\coordinate (side -\y) at ($(corner \y)-(corner \x)$);}
\foreach \x in {(a.side 1),(a.side 7)} {\fill \x circle (2pt);}
\draw (corner 1) -- (corner 2) -- (corner 3) -- (corner 4) -- (corner 5) -- (corner 6) -- (corner 7);
\draw (corner 1) --++(side 3) coordinate (corner 12) --++(side 2) coordinate (corner 11) --++(side 5) coordinate (corner 10) --++(side 1) coordinate (corner 9) --++(side 6) coordinate (corner 8) --++(side 4);
\draw (corner 6) --++(side -4) --++(side -5) --++(side -1);
\draw (corner 4) --++(side -2) --++(side -1);
\draw (corner 2) --++(side 3);
\draw (-.5,-3) node {A rhombic};
\draw (-.5,-3.5) node {tiling of $X(325164)$};
\end{tikzpicture}
\caption{The permutation $315264$ has three forced right-perimeter tiles, whereas the permutation $325164$ does not.}
\label{fig:optimizing examples}
\end{figure}
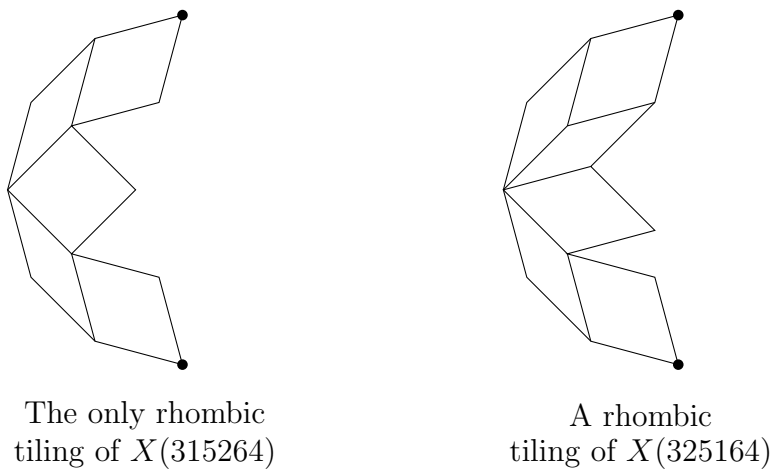

By Theorem~\ref{thm:descents in optimal}, a permutation $w$ with maximally many forced right-perimeter tiles is $321$-avoiding. Therefore, as discussed in Section~\ref{sec:forced}, there is only one rhombic tiling of $X(w)$ for such a permutation, so, in fact, all tiles are forced. Conversely, not all $321$-avoiding permutations satsify the requirements of Theorem~\ref{thm:descents in optimal}. For example, $3412 \in \mf{S}_4$ avoids $321$, and hence $X(3412)$ has a single rhombic tiling, but it has only one forced right-perimeter tile.

Recall that we are only considering permutations with full support. Thus the characterization in Theorem~\ref{thm:descents in optimal} says that a fully supported permutation $w \in \mf{S}_{2m}$ has maximally many forced right-perimeter tiles if and only if $w$ is a $321$-avoiding alternating permutation. Apart from the requirement to be fully supported, this characterization might call the Catalan numbers to mind.

\begin{lemma}[\!\!{\cite[Item 146]{stanley catalan}}]\label{lem:stanley catalan}
The $321$-avoiding alternating permutations in $\mf{S}_{2m}$ are enumerated by the Catalan number $C_m$.
\end{lemma}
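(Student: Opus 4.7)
The plan is to establish a bijection between the $321$-avoiding alternating permutations in $\mf{S}_{2m}$ and standard Young tableaux of shape $(m,m)$, and then invoke the classical enumeration of the latter by $C_m$.

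The first step is to reconcile the lemma's hypothesis with the structural condition appearing in Theorem~\ref{thm:descents in optimal}. With the down-up convention ($w(2k-1) > w(2k)$ and $w(2k) < w(2k+1)$), I would show that a down-up alternating $w \in \mf{S}_{2m}$ avoids $321$ if and only if $w(1) < w(3) < \cdots < w(2m-1)$ and $w(2) < w(4) < \cdots < w(2m)$. One direction is immediate: a violation $w(2k-1) \geq w(2k+1)$ combined with $w(2k+1) > w(2k+2)$ exhibits a $321$-pattern at positions $2k-1 < 2k+1 < 2k+2$, and a violation $w(2k) \geq w(2k+2)$ combined with $w(2k-1) > w(2k)$ is handled analogously. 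The converse is a short case analysis on the parities of the positions of a putative $321$-pattern $w(i) > w(j) > w(k)$; each of the six possible parity patterns is eliminated either by row-monotonicity within a single parity class, or, in the mixed cases ``odd-even-odd'' and ``even-odd-even,'' by comparing the two extremal entries, which share a parity and hence must appear in increasing order.

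The second step is the bijection itself. Given such a $w$, fill the Young diagram of shape $(m,m)$ by placing $w(2k)$ in the top row and $w(2k-1)$ in the bottom row of column $k$. The three conditions---$w(2k-1) > w(2k)$, the odd-indexed monotonicity, and the even-indexed monotonicity---become exactly the column-strict and two row-strict requirements defining a standard Young tableau of shape $(m,m)$, and the inverse map simply reads the tableau column by column. Finally, the hook-length formula gives
\[
f^{(m,m)} \;=\; \frac{(2m)!}{\prod_{\text{cells}} h(c)} \;=\; \frac{1}{m+1}\binom{2m}{m} \;=\; C_m.
\]
The only delicate point is the parity case analysis in the first step; the bijection and the final count are essentially bookkeeping.
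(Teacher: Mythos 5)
Your proof is correct. Note, however, that the paper does not prove this lemma at all: it is quoted directly from Stanley's \emph{Catalan Numbers} (Item 146), so there is no internal argument to compare against. Your self-contained route---first showing that, with the down-up convention used throughout the paper, an alternating permutation in $\mf{S}_{2m}$ avoids $321$ exactly when its odd-position entries and its even-position entries each form increasing sequences, and then reading such a permutation column-by-column as a standard Young tableau of shape $(m,m)$ counted by the hook-length formula---is the standard proof of Stanley's item, and it meshes nicely with Theorem~\ref{thm:descents in optimal}, whose conditions are precisely your two increasing chains together with the column inequalities $w(2k)<w(2k-1)$. Two small remarks: there are eight, not six, parity patterns for the positions of a putative $321$-pattern (six of them contain an adjacent same-parity pair and are killed by monotonicity, leaving exactly the two mixed cases you treat), and in fact the converse direction needs no case analysis at all, since a permutation that is a union of two increasing subsequences has no decreasing subsequence of length three and hence no $321$-pattern. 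Finally, the hook-length computation could be replaced by the elementary ballot-sequence/Dyck-path count of standard Young tableaux of shape $(m,m)$, but the calculation you give is valid and yields $C_m$ as claimed.
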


In fact, we can enumerate fully supported $321$-avoiding alternating permutations in $\mf{S}_{2m}$ via a bijection with $321$-avoiding alternating permutations in $\mf{S}_{2m-2}$.

Define a map $\phi:\{321\text{-avoiding alternating permutations in } \mf{S}_{2m-2}\} \rightarrow \mf{S}_{2m}$ by
$$\big(\phi(v)\big)(i) = \begin{cases}
v(i) + 1 & \text{if $i<2m-1$ is odd},\\
v(i-2) + 1 & \text{if $i>2$ is even},\\
1 & \text{if $i = 2$}, \text{ and}\\
2n & \text{if $i = 2m-1$.}
\end{cases}$$
We will show, through a sequence of lemmas, that the image of $\phi$ is a subset of the set of fully supported $321$-avoiding alternating permutations in $\mf{S}_{2m}$, and finally that $\phi$ is a bijection onto this set.

\begin{lemma}\label{lem:phi fully supported}
The permutation $\phi(v)$ is fully supported.
\end{lemma}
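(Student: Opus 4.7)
The plan is to verify the condition directly from its combinatorial form: for each $r \in \{1, 2, \ldots, 2m-1\}$, I want to show $\{\phi(v)(1), \ldots, \phi(v)(r)\} \neq \{1, \ldots, r\}$, equivalently, to exhibit some index $i \leq r$ with $\phi(v)(i) > r$. The two pinned values in the definition of $\phi$ dispose of the extreme values of $r$ at once: $\phi(v)(2m-1) = 2m > r$ handles $r = 2m-1$, and the alternating inequality $v(1) > v(2) \geq 1$ forces $v(1) \geq 2$, so $\phi(v)(1) = v(1)+1 \geq 3 > 1$ handles $r = 1$. What remains is the range $2 \leq r \leq 2m-2$, which I would split by the parity of $r$.

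For such $r$, a short bookkeeping check from the piecewise definition identifies which $v$-indices contribute to $\phi(v)(1), \ldots, \phi(v)(r)$. When $r$ is odd and $\geq 3$, the contributing indices are $\{1, 2, \ldots, r\} \setminus \{r-1\}$, together with the fixed value $1$ at position $2$; when $r$ is even, they are exactly $\{1, 2, \ldots, r-1\}$, again together with the value $1$. In the odd case, $r-1$ is even, so the alternating pattern makes $v(r-1)$ a local minimum, strictly smaller than both of its neighbours; deleting it therefore does not lower the maximum, and $\max_{i \leq r}\phi(v)(i) \geq 1 + \max_{j \leq r} v(j) \geq 1 + r > r$, using only that $v(1), \ldots, v(r)$ are $r$ distinct positive integers.

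The even case is the crux. The goal reduces to $\max\{v(1), \ldots, v(r-1)\} \geq r$, and the naive counting bound delivers only $\geq r-1$, so there is a one-unit gap to close. This is the main obstacle, and it is the unique place where the alternating hypothesis on $v$ is genuinely used. I would argue by contradiction: if $\{v(1), \ldots, v(r-1)\} = \{1, \ldots, r-1\}$, then because $r-1$ is odd, the alternating pattern forces the descent $v(r-1) > v(r)$, hence $v(r) < v(r-1) \leq r-1$. But the values $1, \ldots, r-1$ have already been exhausted by $v(1), \ldots, v(r-1)$, leaving nothing available for $v(r)$ and contradicting the permutation property. The restriction $r \leq 2m-2$ is exactly what keeps $v(r)$ in the domain of $v$, making this descent argument legal. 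It is worth noting that the $321$-avoidance of $v$ is not needed anywhere in this lemma; only the alternating structure and the permutation property of $v$ are invoked.
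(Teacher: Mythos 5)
Your proof is correct and follows essentially the same route as the paper: after identifying which entries of $v$ populate a given prefix of $\phi(v)$, both arguments split on the parity of the prefix length and exploit the alternating structure at the critical index (the descent $v(r-1)>v(r)$ in the even case, the local minimum at the even position in the odd case). The only difference is cosmetic---the paper runs both cases as contradictions, while you handle the odd case by a direct bound on the maximum.
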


\begin{proof}
Fix $v \in \{321\text{-avoiding alternating permutations in } \mf{S}_{2m-2}\}$ and set $w := \phi(v)$. Suppose that $\{w(1),\ldots, w(k)\} = \{1,\ldots,k\}$ for some $k < 2m$. Consider first the case that $k$ is even. Then
$$\{v(1) + 1, 1, v(3) + 1, v(2) + 1, \ldots, v(k-1) + 1, v(k-2) + 1\} = \{1,\ldots, k\},$$
and so $\{v(1),\ldots,v(k-1)\} = \{1,\ldots, k-1\}$. Because $k$ is even and $v$ is alternating, we must have $v(k-1) > v(k)$, so the first $k-1$ positions of $v$ cannot hold the $k-1$ smallest values, which is a contradiction.

Now consider the case that $k$ is odd. Because $w(2m-1) = 2m$, it must be that $k < 2m-1$. Then we have
$$\{v(1) + 1, 1, v(3) + 1, v(2) + 1, \ldots, v(k-2) + 1, v(k-3) + 1, v(k) + 1\} = \{1,\ldots, k\},$$
and so $\{v(1),\ldots, v(k-2), v(k)\} = \{1,\ldots, k-1\}$. Because $k$ is odd and $v$ is alternating, we must have that $v(k-1)$ is less than both $v(k-2)$ and $v(k)$, which is impossible if $\{v(1),\ldots, v(k-2), v(k)\}$ are the smallest values in the permutation.

Therefore $w$ is fully supported.
\end{proof}

\begin{lemma}\label{lem:phi alternating}
The permutation $\phi(v)$ is alternating.
\end{lemma}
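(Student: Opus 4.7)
The plan is to write $w := \phi(v)$ and verify the two families of inequalities defining the zigzag (alternating) property, namely $w(2k-1) > w(2k)$ for $1 \le k \le m$ and $w(2k) < w(2k+1)$ for $1 \le k \le m-1$, by case analysis on where $i$ falls relative to the piecewise formula for $\phi$. Throughout, I would use that, since $v$ is a $321$-avoiding alternating permutation in $\mf{S}_{2m-2}$, the discussion leading up to Lemma~\ref{lem:stanley catalan} (via Theorem~\ref{thm:descents in optimal}) gives $v(2j-1) > v(2j)$, $v(2j) < v(2j+1)$, and the monotonicities $v(1) < v(3) < \cdots < v(2m-3)$ and $v(2) < v(4) < \cdots < v(2m-2)$.

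For the descents $w(2k-1) > w(2k)$: at $k = 1$ we have $w(1) = v(1)+1 \ge 2 > 1 = w(2)$; at $k = m$ we have $w(2m-1) = 2m$ and $w(2m) = v(2m-2)+1 \le 2m-1$; and for $1 < k < m$ both entries fall in the ``generic'' branch of $\phi$, giving $w(2k-1) = v(2k-1)+1$ and $w(2k) = v(2k-2)+1$, so the inequality reduces to the zigzag ascent $v(2k-2) < v(2k-1)$ of $v$.

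For the ascents $w(2k) < w(2k+1)$: at $k = 1$ we have $w(2) = 1$ while $w(3) \ge 2$ (equal to $v(3)+1$ if $m \ge 3$, and to $2m = 4$ if $m = 2$); at $k = m-1$ we have $w(2m-1) = 2m$, which beats $w(2m-2) \le 2m-1$; and for $1 < k < m-1$ the generic formulas yield $w(2k) = v(2k-2)+1$ and $w(2k+1) = v(2k+1)+1$, whence the inequality reduces to $v(2k-2) < v(2k+1)$, which follows by chaining the strict increase on even positions of $v$ with a zigzag ascent: $v(2k-2) < v(2k) < v(2k+1)$.

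The only real bookkeeping hazard is keeping the four boundary indices $i \in \{1,2,2m-1,2m\}$ straight and not double-using a formula outside its stated range, especially for small $m$ (e.g.\ $m = 2$, where the interior case for ascents is empty and the boundary cases of the two families meet). Once those boundary cases are separated out, every remaining inequality is either an immediate consequence of $v(i) \le 2m-2$ or is one of the zigzag/monotonicity properties of $v$ itself, so there is no deeper obstacle.
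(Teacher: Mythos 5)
Your proof is correct and follows essentially the same route as the paper's: verify the descent and ascent inequalities for $w=\phi(v)$ directly from the alternating property of $v$ together with the consequence $v(i)<v(i+2)$ of $321$-avoidance. The only difference is that you separate the boundary indices $i\in\{1,2,2m-1,2m\}$ more explicitly than the paper does, which is a matter of bookkeeping rather than a different idea.
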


\begin{proof}
Fix $v \in \{321\text{-avoiding alternating permutations in } \mf{S}_{2m-2}\}$ and set $w := \phi(v)$. Because $v$ is alternating and avoids $321$, we must have $v(i) < v(i+2)$ for all $i$. Therefore, for all $k$,
\begin{align*}
w(2k+1) &= v(2k+1) + 1,\\
w(2k) &= v(2k-2) + 1 < v(2k) + 1,\text{ and}\\
w(2k+2) &= v(2k) + 1 < v(2k+2) + 1.
\end{align*}
As an alternating permutation, $v(1)> v(w)$ and $v(2k+1) > v(2k), v(2k+2)$ for all $k\ge 1$. Therefore $w(1) > w(2)$ and $w(2k+1) > w(2k), w(2k+2)$ for all $k \ge 1$, as well.
\end{proof}

\begin{lemma}\label{lem:phi 321-avoiding}
The permutation $\phi(v)$ is $321$-avoiding.
\end{lemma}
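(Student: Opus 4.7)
The plan is to rule out any hypothetical $321$-pattern in $w:=\phi(v)$ by first removing the two "rigidly placed" entries from consideration and then using a parity pigeonhole based on the fact $v(i)<v(i+2)$ that was already exploited in the proof of Lemma~\ref{lem:phi alternating}.

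First I would locate where a $321$-pattern in $w$ could possibly occur. By the definition of $\phi$, $w(2)=1$ and $w(2m-1)=2m$. The value $1$, being the minimum of $w$, could only play the role of the last entry of a $321$-pattern, which would require at least two positions strictly preceding position $2$; only position $1$ qualifies, so this is impossible. Dually, $2m$ could only play the role of the first entry of a $321$-pattern, which would require at least two positions strictly after position $2m-1$; only position $2m$ qualifies. Hence any hypothetical $321$-pattern of $w$ must use three positions drawn from $I:=\{1\}\cup\{3,4,\ldots,2m-2\}\cup\{2m\}$.

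Next I would split $I$ by parity and invoke $v(i)<v(i+2)$ (valid for $321$-avoiding alternating $v$, as noted in Lemma~\ref{lem:phi alternating}). For odd positions of $I$, namely $\{1,3,\ldots,2m-3\}$, the definition gives $w(i)=v(i)+1$, so $w(1)<w(3)<\cdots<w(2m-3)$. For even positions of $I$, namely $\{4,6,\ldots,2m-2\}\cup\{2m\}$, the definition gives $w(4),w(6),\ldots,w(2m-2),w(2m)=v(2)+1,v(4)+1,\ldots,v(2m-4)+1,v(2m-2)+1$, which is again strictly increasing. So within $I$, same-parity positions carry values of $w$ that ascend from left to right.

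The pigeonhole step then finishes the argument: among any three positions $i_1<i_2<i_3$ in $I$, at least two share a parity, and at those two the values of $w$ ascend, which is incompatible with the all-descents pattern $w(i_1)>w(i_2)>w(i_3)$ required by a $321$-occurrence. Therefore $w$ contains no $321$-pattern.

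The main (mild) obstacle is simply recognizing that positions $2$ and $2m-1$ are irrelevant because they hold the global extrema $1$ and $2m$ at the wrong ends of the permutation; once these are stripped away, the surviving positions in $I$ split cleanly by parity into two increasing subsequences of $w$, and the pigeonhole finish is essentially forced.
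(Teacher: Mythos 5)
Your proof is correct and rests on the same key fact as the paper's: the inequality $v(i)<v(i+2)$ forces the odd-indexed and the even-indexed entries of $w=\phi(v)$ each to form an increasing subsequence. The paper packages this as verifying $w(k)<w(k+2)$ for all $k$ (the inequalities across positions $2$ and $2m-1$ being automatic since $w(2)=1$ and $w(2m-1)=2m$) and then invoking the criterion that an alternating permutation is $321$-avoiding exactly when this holds, whereas you instead exclude positions $2$ and $2m-1$ by an extremal-value argument and finish with the two-increasing-subsequences pigeonhole; both routes are sound and essentially equivalent.
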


\begin{proof}
In an alternating permutation, being $321$-avoiding is equivalent to satisfying $w(k) < w(k+2)$ for all $k$. Fix $v \in \{321\text{-avoiding alternating permutations in } \mf{S}_{2m-2}\}$ and set $w := \phi(v)$. By Lemma~\ref{lem:phi alternating}, $w$ is an alternating permutation.

Because $v$ is a $321$-avoiding alternating permutation, we have $v(k) < v(k+2)$ for all $k$. In particular,
$$v(1) + 1 < v(3) + 1 < \cdots < v(2m-3) + 1 \le 2m-2 + 1 < 2m$$
and
$$1 < v(2) + 1 < v(4) + 1 < \cdots < v(2m-4) + 1 < v(2m-2) + 1.$$
Therefore, by definition of $\phi$, we have $w(k) < w(k+2)$ for all $k$, and so $w$ is $321$-avoiding.
\end{proof}

From these results, we see that the image of $\phi$ is actually a subset of the fully supported $321$-avoiding alternating permutations in $\mf{S}_{2m}$. In fact, we can say much more.

\begin{theorem}\label{thm:optimal right enumeration}
The map $\phi$ is a bijection from
$$\{321\text{-avoiding alternating permutations in } \mf{S}_{2m-2}\}$$
to 
$$\{\text{fully supported } 321\text{-avoiding alternating permutations in } \mf{S}_{2m}\},$$
and the number of $w \in \mf{S}_{2m}$ with $m$ forced right-perimeter tiles (equivalently, the number of fully supported $321$-avoiding alternating permutations) is the Catalan number $C_{m-1}$.
\end{theorem}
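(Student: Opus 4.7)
The plan is to leverage Lemmas~\ref{lem:phi fully supported}, \ref{lem:phi alternating}, and \ref{lem:phi 321-avoiding}, which already show that $\phi$ lands in the set of fully supported $321$-avoiding alternating permutations in $\mf{S}_{2m}$. Injectivity of $\phi$ is immediate from its definition: the odd-indexed entries of $\phi(v)$ (for $1 \le i \le 2m-3$) recover $v(i)$ after subtracting $1$, and the even-indexed entries of $\phi(v)$ (for $4 \le i \le 2m$) recover $v(i-2)$ after subtracting $1$. So the real content of the theorem is surjectivity, after which the enumeration follows from Lemma~\ref{lem:stanley catalan} together with Theorem~\ref{thm:descents in optimal}.

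For surjectivity, fix a fully supported $321$-avoiding alternating $w \in \mf{S}_{2m}$ with odd positions as peaks. The first step is to pin down the anchors $w(2)=1$ and $w(2m-1)=2m$. The alternating condition forces the value $1$ to occupy an even position (an odd-indexed occurrence would require a smaller neighbor), and then the chain $w(2) < w(4) < \cdots < w(2m)$, which holds for any $321$-avoiding alternating permutation, forces $w(2)=1$. Dually, $2m$ must sit at an odd position, and full support rules out $w(2m)=2m$ (which would give $\{w(1),\ldots,w(2m-1)\} = \{1,\ldots,2m-1\}$), so the chain $w(1) < w(3) < \cdots < w(2m-1)$ forces $w(2m-1)=2m$.

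Next I would define the candidate preimage $v \in \mf{S}_{2m-2}$ by $v(i) = w(i)-1$ for odd $i$ and $v(i) = w(i+2)-1$ for even $i$, effectively deleting the two anchors and relabeling. That $v$ is a permutation of $\{1,\ldots,2m-2\}$, is $321$-avoiding, and has increasing odd-indexed and even-indexed subsequences transfers immediately from the corresponding properties of $w$. The delicate check, and the main obstacle in the argument, is the alternating condition at the seam: $v(2k-1) > v(2k)$ rewrites as $w(2k-1) > w(2k+2)$ for $1 \le k \le m-1$. If instead $w(2k-1) < w(2k+2)$, then combined with the chains $w(1)<\cdots<w(2k-1)$ and $w(2)<\cdots<w(2k)$, every entry of positions $1, \ldots, 2k$ lies strictly below $w(2k+2)$. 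Meanwhile, $w(2k+1) > w(2k+2)$ (alternating) together with the odd-position chain $w(2k+1) < w(2k+3) < \cdots$ forces every later odd-position entry to exceed $w(2k+2)$, so the $w(2k+2)-1$ values smaller than $w(2k+2)$ must occupy exactly the first $2k$ positions. That yields $\{w(1),\ldots,w(2k)\} = \{1,\ldots,2k\}$, contradicting the full support of $w$. With $v$ confirmed to be a $321$-avoiding alternating permutation in $\mf{S}_{2m-2}$, a direct substitution verifies $\phi(v)=w$, and the enumeration $C_{m-1}$ then drops out of Lemma~\ref{lem:stanley catalan}.
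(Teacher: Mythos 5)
Your proposal is correct and follows essentially the same route as the paper: use Lemmas~\ref{lem:phi fully supported}, \ref{lem:phi alternating}, and~\ref{lem:phi 321-avoiding} to land in the target set, exhibit the explicit preimage $v(i) = w(i)-1$ (odd $i$), $w(i+2)-1$ (even $i$), and conclude the count from Lemma~\ref{lem:stanley catalan}. The only difference is that you spell out details the paper compresses into ``the same types of arguments,'' namely the anchors $w(2)=1$ and $w(2m-1)=2m$ and the seam inequality $w(2k-1) > w(2k+2)$, whose proof is exactly where full support is needed --- a worthwhile elaboration, and your argument there is correct.
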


\begin{proof}
By Lemmas~\ref{lem:phi fully supported}, \ref{lem:phi alternating}, and~\ref{lem:phi 321-avoiding}, we have 
\begin{equation}\label{eqn:phi bijection}
\begin{split}
\phi: \{&\text{$321$-avoiding alternating permutations in } \mf{S}_{2m-2}\}\\
&\rightarrow \{\text{fully supported $321$-avoiding alternating permutations in } \mf{S}_{2m}\}.
\end{split}
\end{equation}
In fact, $\phi$ is a bijection: the preimage of an arbitrary fully supported $321$-avoiding alternating permutation $w \in \mf{S}_{2m}$ is the permutation $v \in \mf{S}_{2m-2}$ defined by
$$v(i) = \begin{cases}
w(i) - 1 & \text{if $i$ is odd, and}\\
w(i+2) - 1 & \text{if $i$ is even.}
\end{cases}$$
That this $v$ is alternating and $321$-avoiding follows from the same types of arguments presented in the proofs of Lemmas~\ref{lem:phi alternating} and~\ref{lem:phi 321-avoiding}. Thus, the two sets in \eqref{eqn:phi bijection} are equinumerous, and both enumerated by $C_{m-1}$, thanks to Lemma~\ref{lem:stanley catalan}.
\end{proof}

We give an example of this enumeration for $m=4$, including a demonstration on the map $\phi$.

\begin{example}
The $C_3 = 5$ fully supported $321$-avoiding alternating permutations in $\mf{S}_8$ are listed in Table~\ref{table:alternating example}, along with their corresponding (via $\phi$) $321$-avoiding alternating permutations in $\mf{S}_6$.
\end{example}

\begin{table}[htbp]
$$\begin{array}{c|c|c}
\text{$321$-avoiding alternating} & & \text{fully supported $321$-avoiding}\\
\text{permutation in $\mf{S}_6$} & \phi & \text{alternating permutation in $\mf{S}_8$}\\
\hline
214365 & \mapsto & 31527486\\
215364 & \mapsto & 31627485\\
314265 & \mapsto & 41527386\\
315264 & \mapsto & 41627385\\
415263 & \mapsto & 51627384
\end{array}$$
\caption{There are $C_3 = 5$ permutations in $\mf{S}_6$ that are $321$-avoiding and alternating. These are in bijection with the fully supported $321$-avoiding alternating permutations in $\mf{S}_8$, by means of the map $\phi$.}
\label{table:alternating example}
\end{table}

\section{Directions for further research}\label{sec:further}

We have focused on Elnitksy polygons because of the Coxeter-theoretic significance of their tiles. Of course, the notion of forced perimeter tiles exists beyond the context of Elnitsky polygons and rhombic tilings, and this deserves attention. Certain regions---Elnitsky polygons or otherwise---may have forced non-perimeter tiles, as well. One could also relax the definition of forcing to study when a given tile is ``$\alpha$-forced,'' where $\alpha \in [0,1]$ is the proportion of tilings in which the tile appears, and so a forced tile in this paper would be a $1$-forced tile in that context.

\end{document}